\documentclass[11pt]{article}
\usepackage{latexsym,amssymb,amsmath,amssymb}
\usepackage{amsfonts,amsbsy,amsthm}
\usepackage[all]{xy}
\textwidth=16.00cm
\textheight=22.00cm
\topmargin=0.00cm
\oddsidemargin=0.00cm
\evensidemargin=0.00cm
\headheight=0cm
\headsep=1cm
\headsep=0.5cm
\numberwithin{equation}{section}
\hyphenation{semi-stable}
\setlength{\parskip}{3pt}

\newtheorem{theorem}{Theorem}[section]
\newtheorem{lemma}[theorem]{Lemma}
\newtheorem{proposition}[theorem]{Proposition}
\newtheorem{corollary}[theorem]{Corollary}

\theoremstyle{definition}
\newtheorem{definition}[theorem]{Definition}

\newtheorem{remark}[theorem]{Remark}
\newtheorem{example}[theorem]{Example}

\begin{document}


\topmargin3mm
\hoffset=-1cm
\voffset=-1.5cm
\

\medskip
\begin{center}
{\large\bf The combinatorial calculation of algebraic invariants of a monomial ideal
}
\vspace{6mm}\\
\footnotetext[1]{{\it Key words and phrases\/}.
Lyubeznik resolution, monomial ideal, Lyubeznik ideal.
\\
AMS Mathematics Subject Classification:  13A15,05E40,05E45}
\footnotetext[2]{This work was partially supported by CONACYT and PRODEP.}

\end{center}

\medskip
\begin{center}
Luis A. Dupont, Daniel G. Mendoza and Miriam Rodr\'{\i}guez
\\
{\small Facultad de Matem\'aticas, Universidad Veracruzana}\vspace{-1mm}\\
{\small Circuito Gonzalo Aguirre Beltr\'an S/N;}\vspace{-1mm}\\
{\small Zona Universitaria;}\vspace{-1mm}\\
{\small Xalapa, Ver., M\'exico, CP 91090.}\vspace{-1mm}\\
{\small e-mail: {\tt ldupont@uv.mx, dgmendozaramirez@gmail.com, \\
rodriguezolivarez19@gmail.com}\vspace{4mm}}
\end{center}

\medskip

\begin{abstract}
\noindent
We introduce the combinatorial Lyubeznik resolution of monomial ideals. We prove that this resolution is isomorphic
to the usual Lyubezbnik resolution. As an application, we give a combinatorial method to determine if an ideal
is a Lyubeznik ideal. Furthermore, the minimality of the Lyubeznik resolution is characterized and we classify all the Lyubeznik symbols using combinatorial criteria. We get a combinatorial expression for the projective dimension, the length of Lyubeznik, and the arithmetical rank of a monomial ideal. We define the Lyubeznik totally ideals as those ideals that yield a minimal free resolution under any total order. Finally, we present that for a family of graphics, that their edge ideals are Lyubeznik totally ideals.
\end{abstract}

\medskip

\section{Introduction}\label{intro}

Let $I$ be an ideal of the polynomial ring $R=K[x_1,\ldots,x_n].$
Constructing a explicit free minimal resolution of $I$ is a basic problem of combinatorial commutative algebra. In 1988, Lyubeznik \cite{Lyubeznik-resolution} constructed a graded free resolution of $R/I$ as a subcomplex
of the Taylor resolution of $R/I$. This complex is called a Lyubeznik resolution.\\

In this paper we present  the combinatorial Lyubeznik free resolution of monomial ideals which have a simplicial complex as support. This is called a Lyubeznik complex. The approach for this study is entirely combinatorial.\\

Moreover, we prove that this resolution is equal to the usual Lyu\-beznik resolution.
We see that this has two main applications. One is an extensive study of the Lyubeznik resolution from the point of view of combinatorics. The other is to set very easy combinatorial conditions to Lyubeznik resolution that tells us when the resolution is  free and  minimal. \\

The ideas for this work go back  to the papers of Novik \cite{Novik} and Guo--Wu--Yu \cite{Guo-Wu-Yu}, but our approach is different from them. \\

We develop a detailed combinatorial study over the generating set $G(I).$ We endowed it with a total order that satisfies a simple property, which guarantees a minimal free resolution. Thus, we can read the Betti numbers of $I$  from the set $G(I)$ with the considered order. In Section 2, we summarize the relevant material on Lyubeznik resolutions, and we review several facts and definitions. In Section 3, we give a detailed exposition of the combinatorial Lyubeznik resolution, which is a theory created by Guo--Wu--Yu in \cite{Guo-Wu-Yu}. In particular, we established that these resolutions are isomorphic. The  main results are in Section 4, where we classify the admissible symbols and inadmissible symbols of the algebraic Lyubeznik resolution in a combinatorial manner, which will allows us to compute the  algebraic invariants by computing combinatorial invariants that are less complicated.
\\
Finally, we present a family of graphs, whose edge ideals are Lyubeznik ideals under any total order.

\section{Lyubeznik Resolution}
Let $I=\langle\mu_{1},\mu_{2},\ldots,\mu_{f}\rangle$ be a monomial ideal in $R=K[x_1,\ldots,x_n]$  and
$$0\rightarrow \bigoplus_{j}R(-j)^{\beta_{p\,j}}
\rightarrow\cdots\rightarrow \bigoplus_{j}R(-j)^{\beta_{1\,j}}\rightarrow \bigoplus_{j}R(-j)^{\beta_{0\,j}}\rightarrow I\rightarrow 0$$
a graded minimal free resolution of $I$ over $R$. Here, $p$ is called the \emph{projective dimension} of $I$ over $R$ and denote it by $projdim(I)$. Therefore
$$0\rightarrow \bigoplus_{j}R(-j)^{\beta_{p\,j}}
\rightarrow\cdots\rightarrow \bigoplus_{j}R(-j)^{\beta_{1\,j}}\rightarrow \bigoplus_{j}R(-j)^{\beta_{0\,j}}\rightarrow R \rightarrow R/I\rightarrow 0$$
is a graded minimal free resolution of $R/I$ over $R$. Put $\beta_{i}=\sum_{j}\beta_{i\,j}$. We have $projdim(R/I)=projdim(I)+1$, $\beta_{i\,j}(I)=\beta_{(i+1)\,j}(R/I)$, $\beta_{i}(I)=\beta_{(i+1)}(R/I)$, $\mu(I)=\beta_{0}(I)$ and $\beta_{0\,j}(I)=|\{\mu_{i} : \emph{deg}(\mu_{i})=j\}|$.
\\
In general, $$\beta_{i\,j}(I)=dim(Tor_{i}^{R}(I,K))_{j}.$$
\\

Let $m_{1},m_{2},\ldots,m_{\mu}$ be an ordered sequence of $\mu$ monomials of $R$, let $I$ be the ideal
generated by these monomials.

\begin{definition}\label{definition-admisible}
For all sequences $(i_{1};i_{2};\ldots;i_{t})$, where $1\leq i_{1} < i_{2} <\cdots< i_{t}\leq\mu$, the symbol
 $u(i_{1};i_{2};\ldots;i_{t})$ will be called $L$-admissible of dimension $t$ if:
\begin{center}
$m_{q}$ does not divide
$lcm(m_{i_{h}},m_{i_{h+1}},\ldots,m_{i_{t}})$
\end{center}
for all $h < t$ and $q < i_{h}$.
We called a symbol $L$-inadmissible if is not $L$-admissible.
\end{definition}

Set $L^{0} = R$ and for all $t = 1,2,\ldots,\mu$ let $L^{t}$ be the free $R$-module generated by all
$L$-admissible symbols of dimension $t$. Define the map $\partial_{t}:L^{t}\rightarrow L^{t-1}$ by setting
\begin{equation*}
\partial_{t}(u(i_{1};i_{2};\ldots;i_{t}))=
\end{equation*}
\begin{equation*}
\sum_{j=1}^{t}(-1)^{j+1}\frac{lcm(m_{i_{1}},m_{i_{2}},\ldots,m_{i_{t}})}{lcm(m_{i_{1}},m_{i_{2}},\ldots \widehat{m_{i_{j}}},\ldots,m_{i_{t}})}u(i_{1};i_{2};\ldots;\widehat{i_{j}};\ldots;i_{t}).
\end{equation*}

The \emph{Lyubeznik resolution} of $R/I$, (or Lyubeznik resolution of $I$), is
a subcomplex of the Taylor resolution of $R/I$ generated by all $L$-admissible symbols.

\begin{theorem}\cite{Lyubeznik-local-coho} The complex
\begin{equation*}
\mathcal{L}\bullet_{(I,<)}:   0\rightarrow L^{\mu}\stackrel{\partial_{\mu}}{\rightarrow} L^{\mu-1}\stackrel{\partial_{\mu-1}}{\rightarrow} \cdots \stackrel{\partial_{2}}{\rightarrow} L^{1}\stackrel{\partial_{1}}{\rightarrow} L^{0}\stackrel{\partial_{0}}{\rightarrow} R/I \rightarrow0
\end{equation*}
is a free resolution of $R/I$.
\end{theorem}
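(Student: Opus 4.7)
The plan is to exhibit $\mathcal{L}\bullet_{(I,<)}$ as the subcomplex of the Taylor complex $\mathcal{T}\bullet(I)$ spanned by the $L$-admissible symbols, and then verify that the inclusion is a quasi-isomorphism. Since the Taylor complex is a known free resolution of $R/I$ and each $L^{t}$ is free by construction, this will give the theorem.

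First I would check that $L^{\bullet}$ is closed under the Taylor differential $\partial$. This turns out to be automatic: if $u(i_{1}; \ldots; i_{t})$ is $L$-admissible and we delete $i_{j}$, then for every admissibility test in the truncated sequence $(i_{1}, \ldots, \widehat{i_{j}}, \ldots, i_{t})$, the relevant suffix lcm either equals the corresponding suffix lcm of the original sequence (when the tested position $h \geq j$) or divides it (when $h < j$, since removing $m_{i_{j}}$ only shrinks the lcm). Because non-divisibility by $m_{q}$ is preserved under passage to a divisor, the truncated symbol is again $L$-admissible. Consequently $\partial_{t}(L^{t}) \subseteq L^{t-1}$, and $\partial^{2}=0$ is inherited from the Taylor complex.

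It remains to show that the inclusion $\mathcal{L}\bullet_{(I,<)} \hookrightarrow \mathcal{T}\bullet(I)$ is a quasi-isomorphism, equivalently that the quotient complex $\mathcal{T}\bullet(I)/\mathcal{L}\bullet_{(I,<)}$, generated over $R$ by the $L$-inadmissible symbols, is acyclic. Here I would build a discrete Morse matching: to each $L$-inadmissible symbol $u(i_{1}; \ldots; i_{t})$ associate the smallest index $q$ such that $m_{q}$ divides $lcm(m_{i_{h}}, \ldots, m_{i_{t}})$ for some $h$ with $q < i_{h}$, and pair $u(i_{1}; \ldots; i_{t})$ with the symbol obtained by inserting $q$ into the sequence (when $q$ is absent from $\{i_{1}, \ldots, i_{t}\}$) or by deleting $q$ (when it is already present). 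One then verifies that both members of a pair are $L$-inadmissible, that the assignment is involutive, and that the corresponding entries of $\partial$ appear with opposite signs, which produces a null-homotopy of the inadmissible quotient.

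The principal obstacle is the verification that this matching is acyclic and sign-compatible: one must check that inserting or deleting the minimal witness $q$ preserves the value of $q$ (so the pairing is well-defined and involutive), that paired symbols remain inadmissible, and that the signs in the Taylor differential align with the intended chain-level homotopy. A careful induction on $t$ and on the witness $q$, together with the observation that introducing or removing $q$ can only create new inadmissibility witnesses that are $\geq q$, disposes of these points. Once the null-homotopy is in hand, the long exact sequence associated to $0 \to \mathcal{L}\bullet_{(I,<)} \to \mathcal{T}\bullet(I) \to \mathcal{T}\bullet(I)/\mathcal{L}\bullet_{(I,<)} \to 0$ forces $H_{i}(\mathcal{L}\bullet_{(I,<)}) = H_{i}(\mathcal{T}\bullet(I))$ for every $i$, which is $R/I$ in degree zero and vanishes otherwise. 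Combined with the freeness of each $L^{t}$, this yields the theorem.
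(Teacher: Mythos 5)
The paper offers no proof of this statement: it is quoted verbatim from Lyubeznik \cite{Lyubeznik-local-coho}, so there is no internal argument to compare yours against, and your proposal has to be judged on its own terms. Your first step is correct and complete: deleting an index from an $L$-admissible symbol yields an $L$-admissible symbol, because every suffix lcm of the truncated sequence divides the corresponding suffix lcm of the original and non-divisibility by $m_{q}$ passes to divisors; hence $\partial_{t}(L^{t})\subseteq L^{t-1}$ and $\mathcal{L}\bullet_{(I,<)}$ really is a subcomplex of the Taylor complex. Your matching on the inadmissible symbols is also better than you give it credit for: the key observation, which you should state explicitly, is that the minimal witness $m_{q}$ divides the lcm of the suffix it certifies, so inserting or deleting $q$ changes \emph{no} suffix lcm; consequently no witness smaller than $q$ can be created, both partners of a pair are inadmissible with the same minimal witness, the pairing is involutive, and the matched entry of $\partial$ is a unit.

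The genuine gap is the final implication. An involutive matching with unit matched entries does not by itself ``produce a null-homotopy'': that inference is precisely the content of algebraic discrete Morse theory and requires the matching to be \emph{acyclic}, i.e., to admit no directed cycle alternating between matched insertions and unmatched codimension-one deletions. You name this as the principal obstacle and then dispose of it by an appeal to ``a careful induction,'' but no such induction is exhibited, and it is the only nontrivial point in the whole argument (it is carried out in detail in the Batzies--Welker treatment of the Lyubeznik resolution, which is essentially the proof you are reconstructing). A shorter route that avoids the issue entirely: by Theorem~\ref{carasimplicial=simboloadmisible} the $L$-admissible symbols are the faces of the simplicial complex $\Delta_{(I,\prec)}$, and for every multidegree $\sigma$ the subcomplex of faces $F$ with $lcm(F)\mid\sigma$ is a cone with apex $\min\{u\in G(I): u\mid\sigma\}$ (the same divisibility check you already performed), hence contractible; the Bayer--Peeva--Sturmfels criterion for complexes supported on a simplicial complex then yields exactness in one stroke, with no matching and no sign bookkeeping.
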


Note that the Lyubeznik resolution of $I$ strictly depends on the order of the sequence $m_{1}< m_{2}< \cdots < m_{\mu}$,
different permutations of the $m_{i}$ can give rise to different resolutions. In general, the Taylor resolution of $I$ is far from being a minimal graded free
resolution but a Lyubeznik resolution of $I$ often gives a minimal graded free resolution or a graded free resolution whose length is equal to the projective
dimension of $R/I$.
\\

For two $L$-admissible symbols $u(i_{1};i_{2};\ldots;i_{s})$ and $u(j_{1};j_{2};\ldots;j_{t})$, we say that
\begin{equation*}
  u(i_{1};i_{2};\ldots;i_{s}) \preceq u(j_{1};j_{2};\ldots;j_{t})
\end{equation*}
if $i_{1},i_{2},\ldots,i_{s}$ is a subsequence of $j_{1},j_{2},\ldots,i_{t}$. Evidently, if $u(j_{1};j_{2};\ldots;j_{t})$ is
$L$-admissible, so are all the smaller symbols. Hence every Lyubeznik resolution is uniquely
determined by its maximal $L$-admissible symbols.

\begin{definition}\label{symbol-broken}
A symbol  $u(i_{1};i_{2};\ldots;i_{t})$ is \emph{stable} of $I$, if for all $1\leq q\leq t$
\begin{equation*}
lcm(m_{i_{1}},m_{i_{2}},\ldots,m_{i_{t}})\neq lcm(m_{i_{1}},m_{i_{2}},\ldots \widehat{m_{i_{q}}},\ldots,m_{i_{t}}).
\end{equation*}
Note that if $u(i_{1};i_{2};\ldots;i_{t})$ is stable, then also are all
smaller $L$-admissible symbols.
\end{definition}

Let $\verb"m"$ be the homogeneous maximal ideal of $R$, i.e., $\verb"m" = \langle x_{1}, x_{2},\ldots, x_{n}\rangle$. $L_{\bullet}$ is minimal if and only
if $\partial_{t}(L^{t})\subseteq \verb"m"L^{t-1}$ for all $t$. By the construction of $\partial_{t}$, $L_{\bullet}$ is minimal if and only if
for all maximal $L$-admissible symbols $u(i_{1};i_{2};\ldots;i_{t})$, $u$ is stable.
\\
We have the following proposition:
\begin{proposition}\label{betti-simbolos}
The Lyubeznik resolution of I with respect to some order of monomial
ge\-ne\-ra\-tors is the minimal free resolution if and only if all maximal $L$-admissible symbols $u(i_{1};i_{2};\ldots;i_{t})$ are stable. In particular,
\[\beta_{t\, j}(R/I)=\beta_{t-1\, j}(I)  \textmd{ and } \]
\[\beta_{t-1\, j}(I)= |\{u(i_{1};i_{2};\ldots;i_{t}) \,:\, u \textmd{ is L-admissible}; j=deg(lcm(m_{i_{1}},m_{i_{2}},\ldots,m_{i_{t}}))\}|.\]
\end{proposition}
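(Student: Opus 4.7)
The plan is to translate the minimality criterion ``$\partial_t(L^t)\subseteq \mathfrak{m}L^{t-1}$ for all $t$'' directly into a condition on the $L$-admissible symbols, and then read off the Betti numbers from the graded pieces of $L^t$. The proof naturally splits into a minimality equivalence and a counting step.

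For the first direction, I would start from the definition of $\partial_t$. The coefficient of $u(i_1;\ldots;\widehat{i_j};\ldots;i_t)$ in $\partial_t(u(i_1;\ldots;i_t))$ is the monomial
\[
\frac{\mathrm{lcm}(m_{i_1},\ldots,m_{i_t})}{\mathrm{lcm}(m_{i_1},\ldots,\widehat{m_{i_j}},\ldots,m_{i_t})},
\]
which lies in $\mathfrak{m}$ if and only if it is not the unit monomial $1$, i.e.\ if and only if the two lcms differ. Hence $\partial_t(u)\in \mathfrak{m}L^{t-1}$ exactly when $u$ is stable in the sense of Definition~\ref{symbol-broken}. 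Therefore minimality, which is equivalent to $\partial_t(L^t)\subseteq \mathfrak{m}L^{t-1}$ for every $t$, is equivalent to every $L$-admissible symbol being stable.

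Next I would reduce ``every $L$-admissible symbol is stable'' to ``every \emph{maximal} $L$-admissible symbol is stable.'' The nontrivial implication uses the remark accompanying Definition~\ref{symbol-broken}: if $u(j_1;\ldots;j_t)$ is stable, then so is every smaller $L$-admissible symbol $u(i_1;\ldots;i_s)\preceq u(j_1;\ldots;j_t)$. Since every $L$-admissible symbol is $\preceq$ some maximal one (the set of $L$-admissible symbols is finite and $\preceq$ is a partial order), stability of all maximal symbols propagates downward to all $L$-admissible symbols. Combined with the previous paragraph, this yields the equivalence claimed in the proposition.

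For the Betti number formula I would use the resolution itself. Once minimality is established, the graded free module $L^t$ is, by construction, a direct sum of copies of $R$ with one summand for each $L$-admissible symbol $u(i_1;\ldots;i_t)$, shifted in degree by $\deg\bigl(\mathrm{lcm}(m_{i_1},\ldots,m_{i_t})\bigr)$; this is because the Taylor basis element associated to $u(i_1;\ldots;i_t)$ has that multidegree and the Lyubeznik complex is a subcomplex. Minimality then identifies these generators with a basis of $\mathrm{Tor}^R_{t}(R/I,K)$, giving
\[
\beta_{t,j}(R/I)=\bigl|\{u(i_1;\ldots;i_t)\;:\;u\text{ is }L\text{-admissible},\ j=\deg\mathrm{lcm}(m_{i_1},\ldots,m_{i_t})\}\bigr|,
\]
and the relation $\beta_{t-1,j}(I)=\beta_{t,j}(R/I)$ from the introductory discussion of the section converts this to the stated formula for $I$.

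The main obstacle I anticipate is the reduction from ``all admissible symbols stable'' to ``all maximal admissible symbols stable.'' It relies entirely on the hereditary remark in Definition~\ref{symbol-broken}, so if that remark were not already granted one would need a short separate verification: checking that removing a single index $i_k$ from a stable symbol $u(i_1;\ldots;i_t)$ cannot create a collapsing lcm, which follows from the fact that any lcm collapse in the truncated symbol would already have been visible in the original.
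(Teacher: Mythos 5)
Your proposal is correct and follows essentially the same route as the paper, which proves this proposition via the discussion immediately preceding it: minimality is equivalent to $\partial_{t}(L^{t})\subseteq \verb"m"L^{t-1}$, the unit-coefficient analysis of $\partial_{t}$ identifies this with stability of the (maximal) $L$-admissible symbols via the hereditary remark in Definition~\ref{symbol-broken}, and the Betti numbers are then read off from the graded generators of the now-minimal resolution. Your extra verification of the hereditary property (a divisibility collapse in a sub-symbol persists in the full symbol) is a correct filling-in of a step the paper only asserts.
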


We define the \emph{L--length of Lyubeznik (With respect to $\prec$)}, $L_{\prec}(I)$, as the length of the Lyubeznik resolution of $R/I$ with respect to the order $\prec$. Also defined the \emph{L--length of Lyubeznik} by:
$$L(I)=\textmd{min}\{L_{\prec}(I) : (G(I),\prec) \textmd{ a monomial order } \}.$$So, $$projdim(R/I)\leq L(I).$$
\begin{definition}
For a monomial ideal $I$, let $G(I)$ be its minimal set of monomial generators. If there is a
total order on $G(I)$ such that the corresponding Lyubeznik resolution of $R/I$ is a minimal free resolution of
$R/I$, then $I$ is called a \emph{Lyubeznik ideal}. In this case, $L(I)=projdim(R/I)$. In addition, we have defined that an ideal $I$ is called an \emph{almost Lyubeznik ideal} when $L(I)=projdim(R/I)$, (But not necessarily the corresponding Lyubeznik resolution of $R/I$ is a minimal free resolution of
$R/I$).
\end{definition}

\section{Meaning of the Lyubeznik resolutions from the point of view of the combinatorial}

Let $I=\langle\mu_{1},\mu_{2},\ldots,\mu_{f}\rangle$ be a monomial ideal in $R=K[x_1,\ldots,x_n]$, let $G(I)=\{\mu_{1},\mu_{2},\ldots,\mu_{f}\}$ be its minimal set of monomial
generators.
\\

The following definitions are necessary for our study of the resolution of Lyubeznik and do not depend on the monomial order considered on $G(I)$.
\begin{definition}
For a subset $A \subseteq G(I)$, the \emph{multidegree} of $A$, denoted by $lcm(A)$, is the least
common multiple of the elements in $A$. We call a subset $C$ of $G(I)$ a \emph{cover} of a monomial
$u \in C$, if $u | lcm(C-\{u\})$, or alternatively we say $C$ covers $u$, denoted by $u\lhd C$. The
\emph{complete cover} induced by a cover $C$, denoted by $\overline{C}$, is $\overline{C}=\{w\in G(I) : w | lcm(C)\}$. A cover $C$ (of $u$) is called an $M$-\emph{minimal
cover} of $G(I)$, if there exists no cover $V$ (of some $v$) whose multidegree $lcm(V)$ is a proper
factor of $lcm(C)$. A cover $C$ of a monomial $u$ is called an $E$-\emph{minimal cover} of $u$ if no proper
subset of $C$ can cover $u$.
\end{definition}

Note that the following definitions depend on the monomial order considered.
\\

Let $\prec$ be a total order on $G(I)$, and let $A$ be a subset of $G(I)$. Let $min(A)$ be
the least element of $A$ under the total order $\prec$. Let $B$ be another subset of $G(I)$. If
$min(A) \prec min(B)$, then we write $A \prec B$. If $A$ has only one element $u$ and $u \prec min(B)$,
then we denote $u \prec B$. A set $D$ is said to be \emph{broken} under the total order $\prec$, if there
exists an element $u \in G(I)$, such that $u | lcm(D)$ and $u \prec D$. In these conditions, we say that $u$ is a \emph{court} of $D$. A subset $E$ of $G(I)$ is called
\emph{preserved}, if no subset of $E$ is broken.
Let $\triangle_{I}$ be the full simplex on $G(I)$. Let $\Delta_{(I,\prec)}$ be
the following simplicial subcomplex of $\triangle_{I}$:
$$\Delta_{(I,\prec)}=\{F\in \triangle_{I} : min\{u\in G(I): u|lcm(H)\}\in H \textmd{  for all  } H\subseteq F\}.$$
The following associated algebraic chain complex, $\mathcal{CL}\bullet_{(I,<)}$, is proved to be a free resolution of $I$, and is called the \emph{combinatorial Lyubeznik resolution}
of $I$ under the total order $\prec$ and $\Delta_{(I,\prec)}$ is the \emph{Lyubeznik complex}
of $I$ under the total order $\prec$.
\begin{equation*}
\mathcal{CL}\bullet_{(I,<)}:   \cdots\rightarrow L^{n}\stackrel{\partial_{n}}{\rightarrow} L^{n-1}\stackrel{\partial_{n-1}}{\rightarrow} \cdots \stackrel{\partial_{2}}{\rightarrow} L^{1}\stackrel{\partial_{1}}{\rightarrow} L^{0}\stackrel{\partial_{0}}{\rightarrow} R/I \rightarrow0
\end{equation*}
Where, $\Delta_{i}=\{F\in \Delta_{(I,\prec)} : |F|=i\}$. For a given $F=\{u_{j 1},u_{j 2},\ldots,u_{j i}\}\in \Delta_{i}$, let $G_{k}=F-\{u_{j k}\}\in L_{i-1}$, $1\leq k\leq i$. With
$$\partial(F)=\sum_{k=1}^{i}\varepsilon_{F}^{G_{k}}\frac{lcm(F)}{lcm(G_{k})},$$ where the sign $\varepsilon_{F}^{G_{k}}$ equals to $1$ (respectively, $-1$) for odd $k$ (for even $k$, respectively).
\\

The next Theorem gives us the equivalence between the Lyubeznik resolution and the combinatorial Lyubeznik resolution.
\begin{theorem}\label{carasimplicial=simboloadmisible}
Let $(G(I),<)$ be a monomial order, and let $F$ be a subset of $G(I)$. Then $F=\{m_{i_{1}}< m_{i_{2}}<\cdots< m_{i_{t}}\} \in \Delta_{(I,\prec)}$ if and only if $u(m_{i_{1}};m_{i_{2}};\ldots;m_{i_{t}})$ is a $L$-admissible symbol. In particular, $F=\{m_{i_{1}}< m_{i_{2}}<\cdots< m_{i_{t}}\}$ is a facet of $\Delta_{(I,\prec)}$ if and only if $u(m_{i_{1}};m_{i_{2}};\ldots;m_{i_{t}})$ is a maximal $L$-admissible symbol.
\end{theorem}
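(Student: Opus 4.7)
My plan is to reduce both conditions to a single statement about ``minimal divisors'' and then verify the two implications in parallel. Observe that since $G(I)$ is a minimal set of monomial generators, for any subset $S\subseteq G(I)$ the set $\{u\in G(I):u\mid \text{lcm}(S)\}$ contains $S$, so its $\prec$-minimum is $\preceq\min S$. With the convention $m_1\prec m_2\prec\cdots\prec m_\mu$, the $L$-admissibility condition in Definition~\ref{definition-admisible} for the symbol $u(i_1;\ldots;i_t)$ is exactly the assertion that for every $h<t$,
\[
\min_{\prec}\{u\in G(I):u\mid \text{lcm}(m_{i_h},m_{i_{h+1}},\ldots,m_{i_t})\}=m_{i_h},
\]
while the condition defining $\Delta_{(I,\prec)}$ asserts the analogous equality with $\{m_{i_h},\ldots,m_{i_t}\}$ replaced by an arbitrary subset $H\subseteq F$. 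So the content of the theorem is that checking \emph{suffixes} suffices to check \emph{all} subsets.

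For the forward direction, suppose $F\in\Delta_{(I,\prec)}$. Then apply the defining property to each suffix $H=\{m_{i_h},\ldots,m_{i_t}\}$ with $h<t$: the $\prec$-minimum generator dividing $\text{lcm}(H)$ belongs to $H$, hence is $\succeq m_{i_h}$; but it is also $\preceq m_{i_h}$ by the opening observation; so it equals $m_{i_h}$, proving $L$-admissibility. For the reverse direction, suppose $u(m_{i_1};\ldots;m_{i_t})$ is $L$-admissible, and let $H\subseteq F$ be arbitrary. Let $m_{i_h}$ be the $\prec$-smallest element of $H$. If $h=t$, then $H=\{m_{i_t}\}$ is a singleton and the minimality of $G(I)$ gives $\min\{u\in G(I):u\mid m_{i_t}\}=m_{i_t}\in H$. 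If $h<t$, then $H\subseteq\{m_{i_h},\ldots,m_{i_t}\}$, so $\text{lcm}(H)\mid\text{lcm}(m_{i_h},\ldots,m_{i_t})$, and any generator dividing $\text{lcm}(H)$ also divides the suffix's lcm; by $L$-admissibility no such generator is $\prec m_{i_h}$, so $\min_{\prec}\{u\in G(I):u\mid\text{lcm}(H)\}=m_{i_h}\in H$.

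The ``in particular'' statement is then immediate: under the bijection $F\leftrightarrow u(m_{i_1};\ldots;m_{i_t})$ just established, inclusion of faces in $\Delta_{(I,\prec)}$ corresponds to the partial order $\preceq$ on $L$-admissible symbols recalled before Definition~\ref{symbol-broken}, so facets of $\Delta_{(I,\prec)}$ correspond exactly to maximal $L$-admissible symbols. The only mildly subtle point in the whole argument is the asymmetry between the one-sided (``suffix'') quantifier in Definition~\ref{definition-admisible} and the full quantifier over all $H\subseteq F$ in the definition of $\Delta_{(I,\prec)}$; this is what the reverse direction has to overcome, and it is handled by the simple observation that every $H\subseteq F$ is contained in the suffix of $F$ starting at $\min_{\prec} H$.
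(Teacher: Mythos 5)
Your proof is correct and follows essentially the same route as the paper's: both directions hinge on the observation that any subset $H\subseteq F$ has $\mathrm{lcm}(H)$ dividing the lcm of the suffix of $F$ starting at $\min_{\prec}H$, so the suffix conditions of Definition~\ref{definition-admisible} control all subsets. You argue directly where the paper argues by contrapositive, and you inline the heredity of $L$-admissibility under subsequences rather than citing it, but the substance is the same.
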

\begin{proof}
Suppose that $u(m_{i_{1}};m_{i_{2}};\ldots;m_{i_{t}})$ is an $L$-inadmissible symbol. There exists $h<t;q<i_{h}$, where $m_{i_{h}}m_{i_{h+1}}\cdots m_{i_{t}}$ is divisible by $m_{q}$.
Therefore $m_{q} < A:=\{m_{i_{h}}< m_{i_{h+1}}<\cdots< m_{i_{t}}\}$ and $m_{q}$ divides $lcm(A)$. Hence $min\{u\in G(I): u|lcm(A)\}\leq m_{q} < A$ implies that
$min\{u\in G(I): u|lcm(A)\} \notin A\subseteq F$ with $F\in \Delta_{(I,\prec)}$, which is a contradiction.
\\
Now, suppose that $u(m_{i_{1}},m_{i_{2}},\ldots,m_{i_{t}})$ is an $L$-admissible symbol and $F=\{m_{i_{1}}< m_{i_{2}}<\cdots< m_{i_{t}}\} \notin \Delta_{(I,\prec)}$. There exists $G\subseteq F$ such that $m_{q}=min\{u\in G(I): u|lcm(G)\}\notin G$, in particular $|G|>1$. Consider $G=\{m_{j_{1}}< m_{j_{2}}<\cdots< m_{j_{s}}\}\subseteq F$ and $(j_{1},j_{2},\ldots,j_{s})$ a subsequence of $(i_{1},i_{2},\ldots,i_{t})$ with $s\geq2$. Note that $u(i_{1};i_{2};\ldots;i_{t})$ $L$-admissible implies that $u(j_{1};j_{2};\ldots;j_{s})$ is $L$-admissible. Furthermore, $1<s,q<j_{1}$ and $$m_{q}|lcm(m_{j_{1}},m_{j_{2}},\ldots,m_{j_{s}}).$$
This contradicts to the fact that $u(j_{1};j_{2};\ldots;j_{s})$ is $L$-admissible.
\end{proof}

\begin{corollary}\label{lyubeznik=Lyubeznikcombinat}
For a given monomial order, $(G(I),<)$, $\mathcal{L}\bullet_{(I,<)}=\mathcal{CL}\bullet_{(I,<)}$. That is to say, the Lyubeznik resolution and the combinatorial Lyubeznik resolution are equal.
\end{corollary}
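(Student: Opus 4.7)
The plan is to deduce the corollary as a direct consequence of Theorem~\ref{carasimplicial=simboloadmisible}, since that theorem already pins down the generators of the two complexes in dimension $t$ in exactly the same way. Concretely, I would proceed in two steps: first identify the free modules level by level, then check that the two differentials agree under that identification.

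For the module identification, I would apply Theorem~\ref{carasimplicial=simboloadmisible} to obtain, for each $t$, a bijection between the $L$-admissible symbols $u(i_{1};i_{2};\ldots;i_{t})$ generating $L^{t}$ in $\mathcal{L}\bullet_{(I,<)}$ and the faces $F=\{m_{i_{1}}<m_{i_{2}}<\cdots<m_{i_{t}}\}\in\Delta_{t}$ generating $L^{t}$ in $\mathcal{CL}\bullet_{(I,<)}$. Since both complexes use the same base ring $R$ and the same index set of generators, this yields a canonical isomorphism of graded $R$-modules in every homological degree.

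For the differentials, I would compare the two defining formulas term by term. Removing the $j$-th entry $i_{j}$ from the admissible symbol $u(i_{1};\ldots;i_{t})$ corresponds, under the bijection, to removing the monomial $m_{i_{j}}=u_{jk}$ from the face $F$, and the coefficient $(-1)^{j+1}\tfrac{lcm(m_{i_{1}},\ldots,m_{i_{t}})}{lcm(m_{i_{1}},\ldots,\widehat{m_{i_{j}}},\ldots,m_{i_{t}})}$ appearing in $\partial_{t}$ is literally $\varepsilon_{F}^{G_{k}}\tfrac{lcm(F)}{lcm(G_{k})}$ in the combinatorial differential. Hence the two boundary maps coincide on generators, and therefore on the whole free module. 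Combining the two identifications gives an equality (not just isomorphism) $\mathcal{L}\bullet_{(I,<)}=\mathcal{CL}\bullet_{(I,<)}$ of chain complexes augmented to $R/I$.

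I do not expect any serious obstacle: all the substantive content has already been absorbed into Theorem~\ref{carasimplicial=simboloadmisible}, and what remains is a formal transcription. The only care needed is bookkeeping with the sign conventions and with the fact that, once we fix the order $<$, the face $F$ inherits a canonical ordering of its elements, so the labels $u_{jk}$ match the indices $i_{j}$ consistently; this is what makes the assertion an actual equality rather than an isomorphism up to sign.
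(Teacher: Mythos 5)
Your proposal is correct and is exactly the argument the paper intends: the corollary is stated with no separate proof precisely because Theorem~\ref{carasimplicial=simboloadmisible} identifies the generating sets in each homological degree, and the two boundary formulas (including the sign conventions $(-1)^{j+1}$ versus $\varepsilon_{F}^{G_{k}}$) are literally the same. Your explicit check of the differentials is a harmless elaboration of what the paper leaves implicit.
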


We recall the definition of preserved set.
\begin{definition}
Let $(G(I),<)$ be a monomial order, and let $F=\{m_{i_{1}}< m_{i_{2}}<\cdots< m_{i_{t}}\}$ be a subset of $G(I)$. Then $F$ is an \emph{preserved set} if
$$\forall A\subseteq F, \textmd{min}\{u\in G(I): u|m(A)\}\in A, \textmd{ equivalently}, F\in \Delta_{(I,\prec)}.$$
\end{definition}
\begin{remark}
Let $(G(I),<)$ be a monomial order, and let $F=\{m_{i_{1}}< m_{i_{2}}<\cdots< m_{i_{t}}\}$ be a preserved subset of $G(I)$, then $F$ is not broken. The  reciprocal is false as shown in the following example:
\end{remark}
\begin{example}
Consider the following monomial ideal
$$I=\langle\underbrace{x^{2}y}_{m_{1}},\underbrace{y^{2}z}_{m_{2}},\underbrace{x^{3}}_{m_{3}}, \underbrace{y^{3}}_{m_{4}}, \underbrace{z^{3}}_{m_{5}}\rangle,$$
and the monomial order in $G(I)=\{m_{1}<m_{2}<m_{3}<m_{4}<m_{5}\}$. Where $\{m_{1}, m_{2}, m_{3}\}$ is not broken, but neither is preserved since the subset $\{m_{2},m_{3}\}$ is broken, with the court $m_{1}$.
\end{example}
We have in combinatorial, an equivalent definition of an $L$-admissible symbol.
\begin{corollary}
Let $(G(I),<)$ be a monomial order, and let $F=\{m_{i_{1}}< m_{i_{2}}<\cdots< m_{i_{t}}\}$ be a subset of $G(I)$. Then $u(m_{i_{1}};m_{i_{2}};\ldots;m_{i_{t}})$ is an $L$-admissible symbol, if and only if, $F$ is a preserved set.
\end{corollary}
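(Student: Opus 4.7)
The statement is essentially a packaging of two facts already in place, so the plan is simply to chain them together. The definition of a preserved set immediately above the corollary is stated as the condition
\[
\forall A\subseteq F,\ \min\{u\in G(I): u\mid \operatorname{lcm}(A)\}\in A,
\]
which by inspection is exactly the membership condition for $F$ in the Lyubeznik complex $\Delta_{(I,\prec)}$. Thus ``$F$ is preserved'' is by definition synonymous with ``$F\in\Delta_{(I,\prec)}$''; there is nothing to prove for this equivalence beyond pointing to the definition.

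On the other side, Theorem \ref{carasimplicial=simboloadmisible} has already established the equivalence
\[
F=\{m_{i_{1}}<\cdots<m_{i_{t}}\}\in\Delta_{(I,\prec)}\iff u(m_{i_{1}};\ldots;m_{i_{t}})\text{ is }L\text{-admissible}.
\]
So the plan is to write a two-line argument: combine the definitional equivalence (preserved $\Leftrightarrow$ $F\in\Delta_{(I,\prec)}$) with the equivalence supplied by Theorem \ref{carasimplicial=simboloadmisible} ($F\in\Delta_{(I,\prec)}$ $\Leftrightarrow$ $L$-admissible symbol), and conclude by transitivity.

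Since the entire content of the corollary is a transitivity step over equivalences already proved, there is no real obstacle. The only thing I would be careful about is making sure the reader sees that ``preserved'' and ``$F\in\Delta_{(I,\prec)}$'' are literally the same condition written in two different notations, so that the corollary is not mistaken for an additional nontrivial combinatorial result. No auxiliary constructions, induction, or case analysis are needed.
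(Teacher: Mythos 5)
Your proposal is correct and matches the paper's (implicit) argument exactly: the corollary is stated without a separate proof precisely because, as you note, ``preserved'' is defined to be synonymous with membership in $\Delta_{(I,\prec)}$, and Theorem~\ref{carasimplicial=simboloadmisible} supplies the other equivalence. The paper's own Proposition~\ref{s.noadmisible=nopreservado} later spells out the same chain of equivalences in contrapositive form, confirming your reading.
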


\begin{lemma}\label{lema1}
Let $(G(I),<)$ be a monomial order, and let $C=\{m_{i_{1}}< m_{i_{2}}<\cdots< m_{i_{t}}\}$ be a subset of $G(I)$. If $C$ is a cover and $u(m_{i_{1}};m_{i_{2}};\ldots;m_{i_{t}})$ is an $L$-admissible symbol, then $u(m_{i_{1}};m_{i_{2}};\ldots;m_{i_{t}})$ is an $L$-admissible symbol which is not stable.
\end{lemma}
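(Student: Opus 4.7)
The plan is to apply the two definitions directly and show that the cover hypothesis produces exactly the equality of least common multiples that the stability definition forbids. First, I would unpack the hypothesis that $C$ is a cover: by Definition 3.1, this means there exists some $m_{i_k} \in C$ (the element being covered) such that
\[
m_{i_k} \mid \mathrm{lcm}(C\setminus\{m_{i_k}\}) = \mathrm{lcm}(m_{i_1},\ldots,\widehat{m_{i_k}},\ldots,m_{i_t}).
\]

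Next, I would establish the key identity $\mathrm{lcm}(C) = \mathrm{lcm}(C\setminus\{m_{i_k}\})$. The inclusion $\mathrm{lcm}(C\setminus\{m_{i_k}\}) \mid \mathrm{lcm}(C)$ is automatic. The reverse divisibility follows because $m_{i_k}$ already divides $\mathrm{lcm}(C\setminus\{m_{i_k}\})$ by the previous step, so adjoining $m_{i_k}$ to the list contributes no new prime-power factor to the lcm; hence the two lcms agree.

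Finally, I would invoke Definition \ref{symbol-broken} with $q=k$: the identity above is precisely a violation of the stability condition at that index, so $u(m_{i_1};m_{i_2};\ldots;m_{i_t})$ is not stable. Since $L$-admissibility is given by hypothesis, we conclude that $u(m_{i_1};m_{i_2};\ldots;m_{i_t})$ is an $L$-admissible symbol which is not stable.

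There is essentially no obstacle in this argument; the lemma is really a bookkeeping statement connecting two definitions (cover in Section 3 and stability in Section 2) that are both phrased in terms of lcms, and the cover condition is engineered to produce exactly the redundancy that destroys stability. The only thing worth being careful about is that the cover condition only guarantees redundancy at one index (the covered element $u$), not at all indices, which is why we only need a single $q$ to witness instability.
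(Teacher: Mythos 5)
Your proof is correct and takes essentially the same route as the paper's: both unpack the cover condition to get $m_{i_k}\mid \mathrm{lcm}(C\setminus\{m_{i_k}\})$, deduce the equality $\mathrm{lcm}(C)=\mathrm{lcm}(C\setminus\{m_{i_k}\})$, and read off the failure of stability at that single index. Your version merely spells out the divisibility bookkeeping that the paper leaves implicit.
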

\begin{proof}
Suppose that $v\lhd C$ for some $m_{i_{k}}=v$. This implies that $$lcm(m_{i_{1}},\ldots,\widehat{m_{i_{k}}},\ldots,m_{i_{t}})=lcm(m_{i_{1}},m_{i_{2}},\ldots,m_{i_{t}}).$$ Hence, $u(m_{i_{1}};m_{i_{2}};\ldots;m_{i_{t}})$ is an $L$-admissible symbol which is not stable.
\end{proof}
\begin{lemma}\label{lema2}
Let $(G(I),<)$ be a monomial order, and let $C=\{m_{i_{1}}< m_{i_{2}}<\cdots< m_{i_{t}}\}$ be a subset of $G(I)$. If $C$ is a preserved cover, then $u(m_{i_{1}};m_{i_{2}};\ldots;m_{i_{t}})$ is an $L$-admissible symbol which is not stable.
\end{lemma}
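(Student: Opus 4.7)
The plan is essentially to combine the corollary that characterizes $L$-admissible symbols via preserved sets with the preceding Lemma~\ref{lema1}, so the argument is a short two-step chain rather than anything substantive.

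First I would invoke the corollary immediately preceding the lemma: since $C=\{m_{i_{1}}<\cdots<m_{i_{t}}\}$ is preserved, the symbol $u(m_{i_{1}};m_{i_{2}};\ldots;m_{i_{t}})$ is already known to be $L$-admissible. This disposes of the admissibility half of the conclusion with no work beyond quoting the corollary, which itself is a restatement of Theorem~\ref{carasimplicial=simboloadmisible}.

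Second, I would feed the pair ``$C$ is a cover'' and ``$u(m_{i_{1}};\ldots;m_{i_{t}})$ is $L$-admissible'' directly into Lemma~\ref{lema1}. That lemma gives exactly the remaining conclusion: $u$ is $L$-admissible and not stable. Unpacking what is hidden inside Lemma~\ref{lema1}, the hypothesis that $C$ is a cover means there is some $m_{i_{k}}\in C$ with $m_{i_{k}}\mid \mathrm{lcm}(C\setminus\{m_{i_{k}}\})$, and then $\mathrm{lcm}(C)=\mathrm{lcm}(C\setminus\{m_{i_{k}}\})$, which is precisely the failure of Definition~\ref{symbol-broken} at the index $q=k$; so stability fails.

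There is no real obstacle here — the lemma is essentially a corollary of Lemma~\ref{lema1} specialized by replacing the hypothesis ``$u$ is $L$-admissible'' with the combinatorial equivalent ``$C$ is preserved''. The only minor care is to confirm that the term \emph{preserved cover} is being read as ``$C$ is simultaneously preserved and a cover (of some $v\in C$)'', which is how the preceding definitions and Lemma~\ref{lema1} use the two notions independently; once that reading is fixed, the proof is a one-line composition of the two prior results.
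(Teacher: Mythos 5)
Your proof is correct and follows essentially the same route as the paper's: establish $L$-admissibility from the preservation hypothesis, then invoke Lemma~\ref{lema1} to get non-stability from the cover hypothesis. The only cosmetic difference is that you cite the corollary equating $L$-admissible symbols with preserved sets, whereas the paper re-runs the underlying contradiction argument (the one from Theorem~\ref{carasimplicial=simboloadmisible}) inline; the substance is identical.
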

\begin{proof}
Suppose that $C=\{m_{i_{1}}< m_{i_{2}}<\cdots< m_{i_{t}}\}$ is a preserved cover and $u(m_{i_{1}};m_{i_{2}};\ldots;m_{i_{t}})$ is an $L$-inadmissible symbol. There exists $h<t;q<i_{h}$, where $m_{i_{h}}m_{i_{h+1}}\cdots m_{i_{t}}$ is divisible by $m_{q}$.
Therefore $m_{q} < A:=\{m_{i_{h}}< m_{i_{h+1}}<\cdots< m_{i_{t}}\}$ and $m_{q}$ divides $lcm(A)$. So, $A$ is broken, implies that
$C$  is not preserved, which is a contradiction. Hence, $u(m_{i_{1}};m_{i_{2}};\ldots;m_{i_{t}})$ is an $L$-admissible symbol. By Lemma~\ref{lema1}, $u(m_{i_{1}};m_{i_{2}};\ldots;m_{i_{t}})$ is a not stable $L$-admissible symbol.
\end{proof}
\begin{lemma}\label{lema3}
Let $(G(I),<)$ be a monomial order, and let $C=\{m_{i_{1}}< m_{i_{2}}<\cdots< m_{i_{t}}\}$ be a subset of $G(I)$. If $u(m_{i_{1}};m_{i_{2}};\ldots;m_{i_{t}})$ is a not stable $L$-admissible symbol, then $C$ is a preserved cover.
\end{lemma}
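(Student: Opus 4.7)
The plan is to unpack the two pieces of the conclusion separately, namely that $C$ is a cover and that $C$ is preserved, and then assemble them. Since the symbol is $L$-admissible by assumption, the Corollary just above (equating $L$-admissibility of $u(m_{i_1};\ldots;m_{i_t})$ with $C$ being a preserved set) already gives the ``preserved'' half for free. Thus the only real work is to show that $C$ is a cover.

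First I would invoke the hypothesis that $u(m_{i_1};m_{i_2};\ldots;m_{i_t})$ is not stable. By Definition~\ref{symbol-broken}, this yields an index $q$ with $1 \le q \le t$ for which
\[
lcm(m_{i_{1}},m_{i_{2}},\ldots,m_{i_{t}}) \;=\; lcm(m_{i_{1}},m_{i_{2}},\ldots,\widehat{m_{i_{q}}},\ldots,m_{i_{t}}).
\]
The right-hand side is then a multiple of $m_{i_q}$, so $m_{i_q} \mid lcm(C \setminus \{m_{i_q}\})$. By the definition of cover, this is exactly the statement $m_{i_q} \lhd C$, so $C$ is a cover (of $m_{i_q}$).

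Second, because $u(m_{i_1};m_{i_2};\ldots;m_{i_t})$ is $L$-admissible, the combinatorial reformulation (the Corollary stating that $L$-admissibility is equivalent to $F$ being a preserved set) tells us that $C$ is preserved. Combining the two observations, $C$ is simultaneously a cover and a preserved set, i.e.\ a preserved cover, which is what we needed.

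The main obstacle, if any, is purely bookkeeping: checking that ``not stable'' really delivers the divisibility needed for $C$ to count as a cover in the precise sense of Section 3, and making sure we are not secretly using the element $m_{i_q}$ itself in the covering lcm. Both concerns dissolve once we write out the lcm identity, so the lemma follows with no further computation.
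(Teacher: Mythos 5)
Your proof is correct and follows essentially the same route as the paper: the lcm identity from ``not stable'' gives that $C$ covers $m_{i_q}$, and $L$-admissibility gives that $C$ is preserved (you cite the Corollary equating admissibility with being a preserved set, whereas the paper re-derives this in place via the definition of $\Delta_{(I,\prec)}$ and a short contradiction). No gaps.
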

\begin{proof}
Suppose that $u(m_{i_{1}};m_{i_{2}};\ldots;m_{i_{t}})$ is an $L$-admissible symbol that is not stable.  There exists $m_{i_{k}}\in C$ such that $lcm(m_{i_{1}},\ldots,\widehat{m_{i_{k}}},\ldots,m_{i_{t}})=lcm(m_{i_{1}},m_{i_{2}},\ldots,m_{i_{t}})$. Therefore, $m_{i_{k}}\lhd C$. By the definition of $\Delta_{(I,\prec)}$, $C$ is preserved. Otherwise, there exists $A\subseteq C$ and $m_{q} \in G(I)-A$ such that  $m_{q} < A$ and $m_{q}$ divides $lcm(A)$. This contradicts to the fact that $A\in \Delta_{(I,\prec)}$.
\end{proof}
\begin{proposition}\label{s.a.noestable=cubiertapreservada}
Let $(G(I),<)$ be a monomial order, and let $C=\{m_{i_{1}}< m_{i_{2}}<\cdots< m_{i_{t}}\}$ be a subset of $G(I)$. Then $C$ is a preserved cover, if and only if, $u(m_{i_{1}};m_{i_{2}};\ldots;m_{i_{t}})$ is a not stable $L$-admissible symbol.
\end{proposition}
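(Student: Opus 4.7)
The plan is almost trivial since the two implications of the biconditional have already been established as separate lemmas immediately above. Specifically, Lemma~\ref{lema2} provides the forward direction: if $C$ is a preserved cover then $u(m_{i_{1}};m_{i_{2}};\ldots;m_{i_{t}})$ is an $L$-admissible symbol that is not stable. Conversely, Lemma~\ref{lema3} gives the reverse direction: if $u(m_{i_{1}};m_{i_{2}};\ldots;m_{i_{t}})$ is a non-stable $L$-admissible symbol, then $C$ is preserved and (by the witness $m_{i_{k}}$ produced from non-stability) is a cover. So the proof is nothing more than citing Lemma~\ref{lema2} for $(\Rightarrow)$ and Lemma~\ref{lema3} for $(\Leftarrow)$.

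Hence the only real task is to record the biconditional as a consequence. I would write: ``By Lemma~\ref{lema2}, if $C$ is a preserved cover then $u(m_{i_{1}};\ldots;m_{i_{t}})$ is a non-stable $L$-admissible symbol. Conversely, by Lemma~\ref{lema3}, if $u(m_{i_{1}};\ldots;m_{i_{t}})$ is a non-stable $L$-admissible symbol, then $C$ is a preserved cover. Therefore the two conditions are equivalent.''

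There is no real obstacle here; the only thing to double-check is that Lemma~\ref{lema3} does in fact yield the \emph{cover} property (not merely preservation). This is implicit in its proof: non-stability produces an index $k$ such that $lcm(m_{i_{1}},\ldots,\widehat{m_{i_{k}}},\ldots,m_{i_{t}})=lcm(m_{i_{1}},\ldots,m_{i_{t}})$, which means $m_{i_{k}}\mid lcm(C\setminus\{m_{i_{k}}\})$, i.e.\ $m_{i_{k}}\lhd C$, so $C$ is a cover. If a reviewer wanted explicitness, I would add one sentence to that effect; otherwise the proposition reads as a packaging of the two lemmas and deserves no more than two lines of proof.
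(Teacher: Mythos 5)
Your proposal is correct and is essentially identical to the paper's own proof, which simply cites Lemma~\ref{lema2} for one direction and Lemma~\ref{lema3} for the other. Your extra remark that Lemma~\ref{lema3} does supply the cover property (via the witness $m_{i_{k}}$ from non-stability) is a reasonable clarification but not a departure from the paper's argument.
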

\begin{proof}
Lemmas~(\ref{lema2}) and ~(\ref{lema3}).
\end{proof}
\begin{proposition}\label{equiva1}
Let $(G(I),<)$ be a monomial order. Then the following statements are equivalent
\begin{description}
  \item[(i)] For every element $u$ of $G(I)$ and
every cover $C$ of $u$, $C$ is not preserved.
  \item[(ii)] For every element $u$ of $G(I)$ and
every cover $C$ of $u$, there exist $D \subseteq C$ and $v \notin D$, such that $D\cup\{v\}$ is a
cover of $v$, satisfying $\textmd{min}(\overline{D}-D)< \textmd{min}(D)$.
\end{description}
\end{proposition}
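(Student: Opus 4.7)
The plan is to unwind both statements to their underlying definitions and observe they say the same thing. The bridge is the simple remark that a subset $D\subseteq G(I)$ is broken if and only if $\overline{D}-D$ contains an element strictly below $\min(D)$, equivalently $\min(\overline{D}-D)<\min(D)$; this is immediate from the definitions of ``court'' and of the complete cover $\overline{D}=\{w\in G(I):w\mid lcm(D)\}$.

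For (i)$\Rightarrow$(ii), I start with a cover $C$ of some $u\in G(I)$. By (i), $C$ is not preserved, so some subset $A\subseteq C$ is broken: there exists a court $w\in G(I)$ with $w\mid lcm(A)$ and $w\prec A$, so $w<\min(A)$ and in particular $w\notin A$. Setting $D:=A$ and $v:=w$, one has $D\subseteq C$, $v\notin D$, and $v\mid lcm(D)$, so $D\cup\{v\}$ is a cover of $v$. Since $v\in\overline{D}-D$, it also follows that $\min(\overline{D}-D)\le v<\min(D)$, giving the required inequality.

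For (ii)$\Rightarrow$(i), I take a cover $C$ of $u$ and invoke (ii) to produce $D\subseteq C$ and $v\notin D$ such that $D\cup\{v\}$ is a cover of $v$ and $\min(\overline{D}-D)<\min(D)$. Setting $w:=\min(\overline{D}-D)\in G(I)$, the definition of $\overline{D}$ yields $w\mid lcm(D)$, while the hypothesized inequality gives $w\prec D$. Thus $D\subseteq C$ is broken, so $C$ is not preserved.

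There is no substantial obstacle here; the proposition is essentially a rewording of the assertion ``some subset of $C$ contains a strictly smaller element of $G(I)$ dividing its $lcm$''. The only care needed is to keep in mind the convention introduced earlier that $u\prec D$ means $u<\min(D)$ and therefore automatically forces $u\notin D$; this lets the same element $w$ serve both as the court of a broken subset and as the element $v$ in (ii) without any additional bookkeeping.
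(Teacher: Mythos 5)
Your proof is correct and follows essentially the same route as the paper: in both directions you pass from a non-preserved cover to a broken subset $D$ with court $v$, observe $v\mid lcm(D)$ makes $D\cup\{v\}$ a cover of $v$ with $\min(\overline{D}-D)\le v<\min(D)$, and conversely read off the court of $D$ from $\min(\overline{D}-D)$. The only difference is cosmetic: you name the court in the converse direction explicitly, which the paper leaves implicit.
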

\begin{proof}
The condition of that $C$ is not preserved implies that there exists a broken subset $D$ of $C$,
such that some element $v\notin D$ with $v < D$ holds that $v|lcm(D)$, implies that $v \lhd D\cup\{v\}$, satisfying $\textmd{min}(\overline{D}-D)\leq v < \textmd{min}(D)$.
Conversely, suppose $(ii)$. Let $u\lhd C$ be a cover of $u$. Then there exist $D \subseteq C$ and $v \notin D$, such that $D\cup\{v\}$ is a
cover of $v$, satisfying $\textmd{min}(\overline{D}-D)< \textmd{min}(D)$. Hence $D$ is broken and $C$ is not preserved.
\end{proof}

With a similar proof, and considering a minimal cover in each cover, we get the following result.
\begin{proposition}\label{equiva2}
Let $(G(I),<)$ be a monomial order. Then the following statements are equivalent
\begin{description}
  \item[(i)] For every element $u$ of $G(I)$ and
every $E$--minimal cover $C$ of $u$, $C$ is not preserved.
  \item[(ii)] For every element $u$ of $G(I)$ and
every $E$--minimal cover $C$ of $u$, there exist $D \subseteq C$ and $v \notin D$, such that $D\cup\{v\}$ is an $E$--minimal
cover of $v$, satisfying $\textmd{min}(\overline{D}-D)< \textmd{min}(D)$.
\end{description}
\end{proposition}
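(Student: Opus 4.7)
The plan is to mimic the proof of Proposition~\ref{equiva1} almost verbatim, with one extra bookkeeping step: whenever that argument produces a cover of $v$, I must shrink it to an $E$-minimal cover without losing the order inequality on $v$. Both directions then reduce to the same broken-subset manipulation used earlier.

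First I would prove (i)$\Rightarrow$(ii). Let $C$ be an $E$-minimal cover of some $u\in G(I)$. By (i) $C$ is not preserved, so (exactly as in Proposition~\ref{equiva1}) there exist a broken subset $D\subseteq C$ and an element $v\in G(I)\setminus D$ with $v<D$ and $v\mid \mathrm{lcm}(D)$. Thus $D\cup\{v\}$ is a cover of $v$, but it need not be $E$-minimal. To repair this, choose $D'\subseteq D$ minimal with respect to inclusion subject to $v\mid \mathrm{lcm}(D')$; then $D'\cup\{v\}$ is an $E$-minimal cover of $v$, and $D'\subseteq C$. Since $D'\subseteq D$ we have $\min(D)\leq \min(D')$ and hence $v<\min(D')$; as $v\in \overline{D'}\setminus D'$, this yields $\min(\overline{D'}-D')\leq v<\min(D')$, which is exactly (ii) with $D'$ in place of $D$.

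For (ii)$\Rightarrow$(i), let $C$ be an $E$-minimal cover of $u$. Apply (ii) to obtain $D\subseteq C$ and $v\notin D$ with $D\cup\{v\}$ an $E$-minimal cover of $v$ and $\min(\overline{D}-D)<\min(D)$. Set $w=\min(\overline{D}-D)$; then $w\mid \mathrm{lcm}(D)$, $w\notin D$, and $w<\min(D)$, so $D$ is broken. Because $D\subseteq C$, the cover $C$ is not preserved, establishing (i).

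The only delicate point is in the forward direction: Proposition~\ref{equiva1} was content to exhibit a cover of $v$, but here I must upgrade it to an $E$-minimal one. The shrinking step handles this cleanly because any $D'\subseteq D$ with $v\mid \mathrm{lcm}(D')$ inherits $D'\subseteq C$, and minimality of the choice gives $E$-minimality of $D'\cup\{v\}$; crucially, the inequality $v<\min(D)\leq \min(D')$ is preserved automatically under shrinking, so no additional combinatorial input is needed. Everything else is a word-for-word translation of the argument for Proposition~\ref{equiva1}, justifying the author's remark that the proof is similar.
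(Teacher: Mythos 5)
Your proof is correct and takes essentially the same approach the paper intends: the paper only remarks that this proposition follows from Proposition~\ref{equiva1} ``with a similar proof, and considering a minimal cover in each cover,'' and your shrinking of $D$ to an inclusion-minimal $D'$ with $v\mid \mathrm{lcm}(D')$ is exactly that step, with the correct observation that $D'\subseteq D$ gives $\min(D)\leq\min(D')$ so the inequality $v<\min(D')$ survives. The converse direction is likewise the same broken-subset argument as in the paper's proof of Proposition~\ref{equiva1}.
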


We can summarize the above results in the following theorem
\begin{theorem}\label{equivalenciasdeideallyubeznik}
Let $(G(I),<)$ be a monomial order. Then the following statements are equivalent
\begin{description}
  \item[(i)] The Lyubeznik resolution, $\mathcal{L}\bullet_{(I,<)}$, of $I$ with respect to the monomial order $(G(I),<)$ is a minimal free resolution.
  \item[(ii)] For every element $u$ of $G(I)$ and
every cover $C$ of $u$, $C$ is not preserved.
  \item[(iii)] For every element $u$ of $G(I)$ and
every cover $C$ of $u$, there exist $D \subseteq C$ and $v \notin D$, such that $D\cup\{v\}$ is a
cover of $v$, satisfying $\textmd{min}(\overline{D}-D)< \textmd{min}(D)$.
  \item[(iv)] For every element $u$ of $G(I)$ and
every $E$--minimal cover $C$ of $u$, $C$ is not preserved.
  \item[(v)] For every element $u$ of $G(I)$ and
every $E$--minimal cover $C$ of $u$, there exist $D \subseteq C$ and $v \notin D$, such that $D\cup\{v\}$ is an $E$--minimal
cover of $v$, satisfying $\textmd{min}(\overline{D}-D)< \textmd{min}(D)$.
\end{description}
\end{theorem}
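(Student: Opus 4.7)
The plan is to bootstrap the five conditions from the propositions already proved in this section. Proposition~\ref{equiva1} already gives (ii)$\Leftrightarrow$(iii), and Proposition~\ref{equiva2} already gives (iv)$\Leftrightarrow$(v), so what I really have to show is (i)$\Leftrightarrow$(ii) together with (ii)$\Leftrightarrow$(iv); after that, all five conditions collapse into a single equivalence class.

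For (i)$\Leftrightarrow$(ii) I would chain Propositions~\ref{betti-simbolos} and~\ref{s.a.noestable=cubiertapreservada}. The former says that $\mathcal{L}\bullet_{(I,<)}$ is minimal if and only if every maximal $L$-admissible symbol is stable. Since, by the remark accompanying Definition~\ref{symbol-broken}, stability passes to subsymbols, this is equivalent to the apparently stronger statement that \emph{no} $L$-admissible symbol is non-stable. Proposition~\ref{s.a.noestable=cubiertapreservada} then reinterprets the negation as ``no preserved cover exists'', which is exactly condition~(ii).

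For (ii)$\Leftrightarrow$(iv) the forward implication is free because every $E$-minimal cover is, in particular, a cover. Conversely, given an arbitrary cover $C$ of some $u\in G(I)$, I would prune $C$ by removing non-essential elements (keeping $u$) until reaching an $E$-minimal subcover $C'\subseteq C$ of $u$; by hypothesis $C'$ is not preserved, so some $A\subseteq C'$ violates the defining condition of $\Delta_{(I,\prec)}$, and since this same $A$ is a subset of $C$, the cover $C$ inherits the violation and is not preserved either. Thus preservedness is hereditary under subsets, and that is the whole content of (iv)$\Rightarrow$(ii).

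The only delicate point is the claim that stability passes to subsymbols used in the first step. It is a short lcm manipulation: if the subsymbol $u(i_{1};\ldots;\widehat{i_{k}};\ldots;i_{t})$ suffered a collapse $lcm(\ldots)=lcm(\ldots,\widehat{m_{i_{j}}},\ldots)$ after also deleting $m_{i_{j}}$, then adjoining $m_{i_{k}}$ on both sides would exhibit the same collapse in the parent symbol $u(i_{1};\ldots;i_{t})$, contradicting its stability. With this observation in hand the five equivalences assemble cleanly and the theorem follows.
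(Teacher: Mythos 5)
Your proposal is correct and takes essentially the same route as the paper, whose proof simply cites Propositions~\ref{betti-simbolos}, \ref{s.a.noestable=cubiertapreservada}, \ref{equiva1} and \ref{equiva2}. You supply exactly the glue those citations presuppose: the downward heredity of stability (so maximal symbols suffice), the upward heredity of non-preservedness, and the pruning of an arbitrary cover to an $E$-minimal one, all of which check out.
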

\begin{proof}
This follows from Propositions~(\ref{betti-simbolos}),~(\ref{s.a.noestable=cubiertapreservada}),~(\ref{equiva1}) and ~(\ref{equiva2}).
\end{proof}
\begin{corollary}
Let $I$ be a monomial ideal in the polynomial ring $R=K[x_{1},x_{2},\ldots,x_{n}]$. Then the following statements are equivalent
\begin{description}
  \item[(i)] $I$ is a Lyubeznik ideal.
  \item[(ii)] There exists a monomial order $(G(I),<)$, such that for every element $u$ of $G(I)$ and
every cover $C$ of $u$, $C$ is not preserved.
  \item[(iii)] There exists a monomial order $(G(I),<)$, such that for every element $u$ of $G(I)$ and
every cover $C$ of $u$, there exist $D \subseteq C$ and $v \notin D$, such that $D\cup\{v\}$ is a
cover of $v$, satisfying $\textmd{min}(\overline{D}-D)< \textmd{min}(D)$.
  \item[(iv)] There exists a monomial order $(G(I),<)$, such that for every element $u$ of $G(I)$ and
every $E$--minimal cover $C$ of $u$, $C$ is not preserved.
  \item[(v)] There exists a monomial order $(G(I),<)$, such that for every element $u$ of $G(I)$ and
every $E$--minimal cover $C$ of $u$, there exist $D \subseteq C$ and $v \notin D$, such that $D\cup\{v\}$ is an $E$--minimal
cover of $v$, satisfying $\textmd{min}(\overline{D}-D)< \textmd{min}(D)$.
\end{description}
\end{corollary}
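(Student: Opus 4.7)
The plan is to derive this corollary directly from Theorem~\ref{equivalenciasdeideallyubeznik} by attaching an existential quantifier on the monomial order. By the definition of a Lyubeznik ideal, statement (i) is literally the assertion that there exists a total order $<$ on $G(I)$ for which the Lyubeznik resolution $\mathcal{L}\bullet_{(I,<)}$ is a minimal free resolution of $R/I$. Thus (i) can be rewritten as ``there exists a monomial order $(G(I),<)$ satisfying condition (i) of Theorem~\ref{equivalenciasdeideallyubeznik}.''

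First I would show (i) $\Rightarrow$ (ii)--(v). Fix a monomial order $<$ witnessing that $I$ is Lyubeznik, so $\mathcal{L}\bullet_{(I,<)}$ is minimal. By Theorem~\ref{equivalenciasdeideallyubeznik}, applied to this particular order, each of the combinatorial conditions (ii), (iii), (iv), (v) of that theorem holds for $(G(I),<)$. Since the same order witnesses existence, each of (ii)--(v) of the corollary follows.

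Next I would prove each of (ii)--(v) $\Rightarrow$ (i) in a uniform way. Suppose one of (ii)--(v) of the corollary holds, and let $(G(I),<)$ be a monomial order witnessing it. Then the corresponding condition of Theorem~\ref{equivalenciasdeideallyubeznik} holds for this specific order, and by the equivalence established there, $\mathcal{L}\bullet_{(I,<)}$ is a minimal free resolution of $R/I$. Hence $I$ is a Lyubeznik ideal by definition, proving (i).

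There is no genuine obstacle here: the content of the corollary is purely a propagation of the equivalences of Theorem~\ref{equivalenciasdeideallyubeznik} through an existential quantifier on the monomial order, which is valid because all five equivalent conditions of the theorem are parameterized by the same order $<$. The only point to emphasize in writing is that the \emph{same} order witnesses existence on both sides of each implication, so one does not need to switch orders when passing between conditions.
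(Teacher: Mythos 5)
Your proposal is correct and matches the paper's (implicit) argument exactly: the corollary is obtained from Theorem~\ref{equivalenciasdeideallyubeznik} by quantifying existentially over the total order, with the definition of a Lyubeznik ideal supplying the translation of (i), and the same witnessing order carrying through each equivalence. The paper offers no further content for this step, so nothing is missing from your write-up.
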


\begin{example}
Consider the following monomial ideal
$$I=\langle\underbrace{x^{2}y}_{m_{1}},\underbrace{y^{2}z}_{m_{2}},\underbrace{x^{3}}_{m_{3}}, \underbrace{y^{3}}_{m_{4}}, \underbrace{z^{3}}_{m_{5}}\rangle,$$
and the monomial order in $G(I)=\{m_{1}<m_{2}<m_{3}<m_{4}<m_{5}\}$.
The covers of $m_{1}$ are
\begin{itemize}
\item $C_{1}=\{m_{1}, m_{2}, m_{3}, m_{4}, m_{5}\}$,
\item $C_{2}=\{m_{1}, m_{3}, m_{4}, m_{5}\}$,
\item $C_{3}=\{m_{1}, m_{2}, m_{3}, m_{5}\}$,
\item $C_{4}=\{m_{1}, m_{2}, m_{3}\}$ ($E$-minimal),
\item $C_{5}=\{m_{1}, m_{3}, m_{4}\}$ ($E$-minimal),
\item $C_{6}=\{m_{1}, m_{2}, m_{3}, m_{4}\}$.
\end{itemize}
The covers of $m_{2}$ are
\begin{itemize}
\item $C_{7}=\{m_{2}, m_{4}, m_{5}\}$ ($E$-minimal),
\item $C_{8}=\{m_{1}, m_{2}, m_{4}, m_{5}\}$,
\item $C_{9}=\{m_{2}, m_{3}, m_{4}, m_{5}\}$,
\item $C_{1}$.
\end{itemize}
Note that, $m_{3}$, $m_{4}$ and $m_{5}$  have no covers. Furthermore, all covers are not preserved
\begin{itemize}
  \item $C_{1^{*}}=\{m_{2}, m_{3}, m_{4}, m_{5}\}$ is a broken subset of $C_{1}$ with court $m_{1}$.
  \item $C_{2^{*}}=\{ m_{3}, m_{4}, m_{5}\}$ is a broken subset of $C_{2}$ with court $m_{1}$.
  \item $C_{3^{*}}=\{m_{2}, m_{3}, m_{5}\}$ is a broken subset of $C_{3}$ with court $m_{1}$.
  \item $C_{4^{*}}=\{m_{2}, m_{3} \}$ is a broken subset of $C_{4}$ with court $m_{1}$.
  \item $C_{5^{*}}=\{m_{3}, m_{4}\}$ is a broken subset of $C_{5}$ with court $m_{1}$.
  \item $C_{6^{*}}=\{m_{3}, m_{4}\}$ is a broken subset of $C_{6}$ with court $m_{1}$.
  \item $C_{7^{*}}=\{m_{4}, m_{5}\}$ is a broken subset of $C_{7}$ with court $m_{2}$.
  \item $C_{8^{*}}=\{m_{4}, m_{5}\}$ is a broken subset of $C_{8}$ with court $m_{2}$.
  \item $C_{9^{*}}=\{m_{4}, m_{5}\}$ is a broken subset of $C_{9}$ with court $m_{2}$.
\end{itemize}
\begin{remark}
Por medio de comprobación directa calculando las cubiertas en cada orden posible obtenemos lo siguiente:
$I=\langle x^{2}y^{2}< t^{2}z^{2}< x^{2}z^{2}\rangle$ $\subseteq$ $J=\langle x^{2}y^{2}, t^{2}z^{2}, x^{2}z^{2}, t^{2}y^{2}\rangle$
$\subseteq$ $K=\langle x^{2}y^{2}< t^{2}z^{2}< x^{2}z^{2}< t^{2}y^{2}< xyzt\rangle$, donde $I,K$ son ideales de Lyubeznik y $J$ no lo es.
\end{remark}

 As well, by the Theorem~\ref{equivalenciasdeideallyubeznik} the Lyubeznik resolution of $I$ with respect to the monomial order given in $G(I)$ is a minimal free resolution. Therefore, the ideal $I$ is a Lyubeznik ideal.
\end{example}

\section{Classification of the symbols of $I$ and the subsets of $G(I)$}

\begin{proposition}\label{s.estable=nocubierta}
Let $(G(I),<)$ be a monomial order, and let $F=\{m_{i_{1}}< m_{i_{2}}<\cdots< m_{i_{t}}\}$ be a subset of $G(I)$. Then $u(m_{i_{1}};m_{i_{2}};\ldots;m_{i_{t}})$ is a stable symbol, if and only if, $F$ is not cover.
\end{proposition}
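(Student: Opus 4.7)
The plan is to prove the biconditional by contrapositive, showing that $u(m_{i_{1}};m_{i_{2}};\ldots;m_{i_{t}})$ fails to be stable if and only if $F$ is a cover of one of its own elements. Both directions reduce to the elementary fact that for monomials $a, b_{1},\ldots,b_{s}$ one has $a \mid \mathrm{lcm}(b_{1},\ldots,b_{s})$ if and only if $\mathrm{lcm}(a, b_{1},\ldots,b_{s}) = \mathrm{lcm}(b_{1},\ldots,b_{s})$. This is the hinge of the proof; once it is stated, each direction is a one-line unpacking.

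First I would prove the direction "$F$ is a cover $\Rightarrow$ the symbol is not stable." Assume $F$ covers some $u=m_{i_{k}} \in F$, so that $m_{i_{k}} \mid \mathrm{lcm}(F-\{m_{i_{k}}\}) = \mathrm{lcm}(m_{i_{1}},\ldots,\widehat{m_{i_{k}}},\ldots,m_{i_{t}})$. By the hinge fact applied with $a=m_{i_{k}}$, adjoining $m_{i_{k}}$ does not change the lcm, so
\begin{equation*}
\mathrm{lcm}(m_{i_{1}},\ldots,m_{i_{t}}) = \mathrm{lcm}(m_{i_{1}},\ldots,\widehat{m_{i_{k}}},\ldots,m_{i_{t}}).
\end{equation*}
This violates the stability condition of Definition~\ref{symbol-broken} for $q=k$, so the symbol is not stable.

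For the converse direction, I assume the symbol is not stable, so there exists $q$ with $1 \leq q \leq t$ and
\begin{equation*}
\mathrm{lcm}(m_{i_{1}},\ldots,m_{i_{t}}) = \mathrm{lcm}(m_{i_{1}},\ldots,\widehat{m_{i_{q}}},\ldots,m_{i_{t}}).
\end{equation*}
Since $m_{i_{q}}$ divides the left-hand side (it is one of the factors), it divides the right-hand side too, i.e.\ $m_{i_{q}} \mid \mathrm{lcm}(F-\{m_{i_{q}}\})$. By the definition of cover, this exhibits $F$ as a cover of $m_{i_{q}}$.

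Combining both directions gives the contrapositive equivalence: the symbol is stable if and only if $F$ is not a cover of any of its elements, which is what the proposition asserts. There is no real obstacle here; the proof is a direct definitional translation, with the only small technical point being the lcm identity stated at the start.
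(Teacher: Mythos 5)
Your proof is correct and follows essentially the same route as the paper's: both arguments are a direct definitional unpacking hinging on the equivalence $m_{i_{k}} \mid \mathrm{lcm}(F-\{m_{i_{k}}\})$ if and only if $\mathrm{lcm}(F)=\mathrm{lcm}(F-\{m_{i_{k}}\})$, with your version merely phrased as the contrapositive biconditional. No differences worth noting.
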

\begin{proof}
Suppose that $u(m_{i_{1}};m_{i_{2}};\ldots;m_{i_{t}})$ is a stable symbol. Then $m_{i_{k}}$ does not divide
$$lcm(m_{i_{1}},\ldots,\widehat{m_{i_{k}}},\ldots,m_{i_{t}})$$ for all $1\leq k\leq t$. Hence $F$ is not cover.
Conversely, if $F$ is not a cover, then $$lcm(m_{i_{1}},\ldots,\widehat{m_{i_{k}}},\ldots,m_{i_{t}})\neq lcm(m_{i_{1}},m_{i_{2}},\ldots,m_{i_{t}}).$$ Hence, $u(m_{i_{1}};m_{i_{2}};\ldots;m_{i_{t}})$ is a stable symbol.
\end{proof}
\begin{proposition}\label{s.noadmisible=nopreservado}
Let $(G(I),<)$ be a monomial order, and let $F=\{m_{i_{1}}< m_{i_{2}}<\cdots< m_{i_{t}}\}$ be a subset of $G(I)$. Then $u(m_{i_{1}};m_{i_{2}};\ldots;m_{i_{t}})$ is a symbol that is  $L$-inadmissible, if and only if, $F$ is not preserved.
\end{proposition}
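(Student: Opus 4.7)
The plan is to derive this statement as an immediate contrapositive of Theorem~\ref{carasimplicial=simboloadmisible} combined with the tautological identification of ``preserved'' subsets with faces of the Lyubeznik complex $\Delta_{(I,\prec)}$. Indeed, in the definition of a preserved set given in the previous section, a subset $F\subseteq G(I)$ is called preserved precisely when, for every $A\subseteq F$, the minimum $\min\{u\in G(I):u\mid lcm(A)\}$ belongs to $A$; this is verbatim the defining condition of membership in $\Delta_{(I,\prec)}$. Hence $F$ is preserved if and only if $F\in \Delta_{(I,\prec)}$.

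With this dictionary in hand, I would simply cite Theorem~\ref{carasimplicial=simboloadmisible}, which asserts that $F=\{m_{i_{1}}<\cdots<m_{i_{t}}\}\in\Delta_{(I,\prec)}$ if and only if $u(m_{i_{1}};\ldots;m_{i_{t}})$ is an $L$-admissible symbol. Chaining the two equivalences gives: $F$ is preserved iff the symbol is $L$-admissible. Negating both sides yields the desired statement: $F$ is not preserved iff $u(m_{i_{1}};\ldots;m_{i_{t}})$ is $L$-inadmissible.

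There is essentially no obstacle; the entire content of the proposition is the translation ``preserved $\equiv$ face of $\Delta_{(I,\prec)}$ $\equiv$ $L$-admissible symbol'' that has already been established. If one preferred to avoid invoking Theorem~\ref{carasimplicial=simboloadmisible} and to argue directly, the same two directions of that theorem could be reproduced verbatim here: from an $L$-inadmissibility witness $h<t$ and $q<i_{h}$ with $m_{q}\mid lcm(m_{i_{h}},\ldots,m_{i_{t}})$ one obtains the broken subset $A=\{m_{i_{h}},\ldots,m_{i_{t}}\}\subseteq F$ showing $F$ is not preserved; conversely, from a subset $A\subseteq F$ with $|A|\ge 2$ (the case $|A|=1$ being impossible since $G(I)$ is a minimal generating set) and $m_{q}=\min\{u: u\mid lcm(A)\}\notin A$, one reads off an $L$-inadmissibility witness for the symbol on $F$. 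Since this is precisely the argument already written out for Theorem~\ref{carasimplicial=simboloadmisible}, the cleaner plan is to state the proposition as an immediate corollary and keep the proof to a single sentence citing that theorem together with the definition of preserved.
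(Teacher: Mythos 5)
Your proposal is correct and follows essentially the same route as the paper: the paper's own proof is precisely the chain ``$L$-inadmissible $\Leftrightarrow F\notin\Delta_{(I,\prec)}$ $\Leftrightarrow$ some $A\subseteq F$ has $\min\{u: u\mid lcm(A)\}\notin A$ $\Leftrightarrow$ some $A\subseteq F$ is broken $\Leftrightarrow$ $F$ is not preserved,'' which is exactly your combination of Theorem~\ref{carasimplicial=simboloadmisible} with the definitional identification of preserved sets with faces of $\Delta_{(I,\prec)}$.
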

\begin{proof}
$u(m_{i_{1}};m_{i_{2}};\ldots;m_{i_{t}})$ is an $L$-inadmissible symbol $\Leftrightarrow$ $F \notin \Delta_{(I,\prec)}$ $\Leftrightarrow$
there exists $A\subseteq F$ such that $\textmd{min}\{u\in G(I) : u|lcm(A)\}\notin A$ $\Leftrightarrow$ there exists $A\subseteq F$ broken $\Leftrightarrow$ $F$ is not preserved.
\end{proof}
\begin{theorem}\label{clasificasimbolos}\emph{(Classification of the symbols of $I$)}
Let $(G(I),<)$ be a monomial order, and let $F=\{m_{i_{1}}< m_{i_{2}}<\cdots< m_{i_{t}}\}$ be a subset of $G(I)$. Then $u(m_{i_{1}};m_{i_{2}};\ldots;m_{i_{t}})$ is a symbol that satisfies only one of the following three properties:
\begin{itemize}
    \item is an $L$-admissible symbol that is not stable and $F$  is a preserved cover.
    \item is a stable $L$-admissible symbol and $F$ is preserved and is not a cover.
    \item is a stable  symbol that is a $L$-inadmissible and $F$ is not preserved set and is not cover.
    \item is a non-stable symbol that is $L$-inadmissible and $F$ is not preserved set and $F$ is a cover.
  \end{itemize}
\end{theorem}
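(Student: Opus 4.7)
The plan is to view this theorem as a purely formal consequence of the three immediately preceding propositions, organized as a case analysis on two independent binary attributes of the symbol $u := u(m_{i_1}; m_{i_2}; \ldots; m_{i_t})$: whether $u$ is stable or not, and whether $u$ is $L$-admissible or not. These two dichotomies partition all symbols into $2 \times 2 = 4$ mutually exclusive and exhaustive classes, which will be matched one-to-one with the four combinatorial conditions on $F$ listed in the theorem. (The word ``three'' in the statement appears to be a typographical slip, since four alternatives are listed.)

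The first step is to isolate from the previous results the two basic translations between symbol properties and combinatorial properties of $F$. Proposition~\ref{s.estable=nocubierta} gives $u$ stable $\iff F$ is not a cover, equivalently $u$ non-stable $\iff F$ is a cover. Proposition~\ref{s.noadmisible=nopreservado} gives $u$ is $L$-inadmissible $\iff F$ is not preserved, equivalently $u$ is $L$-admissible $\iff F$ is preserved. These two biconditionals are logically independent and therefore may be combined freely.

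The second step is the case analysis itself. If $u$ is $L$-admissible and non-stable, combining the two equivalences yields that $F$ is preserved and a cover, i.e., a preserved cover; this is item~(i), which is also consistent with Proposition~\ref{s.a.noestable=cubiertapreservada}. If $u$ is $L$-admissible and stable, then $F$ is preserved and not a cover, giving item~(ii). If $u$ is $L$-inadmissible and stable, then $F$ is not preserved and not a cover, giving item~(iii). If $u$ is $L$-inadmissible and non-stable, then $F$ is not preserved and is a cover, giving item~(iv). Since every symbol sits in exactly one cell of the stable/non-stable $\times$ admissible/inadmissible table, the four cases are jointly exhaustive and pairwise disjoint, and the ``only one'' clause follows automatically.

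There is essentially no obstacle in this proof; the entire content is bookkeeping built on the two equivalences above. The only point requiring care is making the exclusivity and exhaustiveness of the four cases explicit on the combinatorial side (that is, checking that ``preserved cover'', ``preserved and not a cover'', ``not preserved and not a cover'', and ``not preserved and a cover'' are a partition of the subsets of $G(I)$), which is immediate since ``preserved'' and ``cover'' are independent binary attributes of $F$.
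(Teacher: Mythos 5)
Your proposal is correct and matches the paper's proof, which likewise derives the theorem directly from Propositions~\ref{s.a.noestable=cubiertapreservada}, \ref{s.estable=nocubierta} and \ref{s.noadmisible=nopreservado}; your explicit $2\times2$ case analysis is just the bookkeeping the paper leaves implicit. Your observation that ``three'' should read ``four'' is also accurate.
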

\begin{proof}
This follows from the Propositions~(\ref{s.a.noestable=cubiertapreservada}),~(\ref{s.estable=nocubierta}) and~(\ref{s.noadmisible=nopreservado}).
\end{proof}
\begin{theorem}\label{clasificaconjuntos}\emph{(Classification of the subsets of $G(I)$)}
Let $(G(I),<)$ be a monomial order, and let $F=\{m_{i_{1}}< m_{i_{2}}<\cdots< m_{i_{t}}\}$ be a subset $G(I)$. Then $F$ satisfies only one of the following four properties:
\begin{itemize}
  \item is a preserved cover and $u(m_{i_{1}};m_{i_{2}};\ldots;m_{i_{t}})$  is a $L$-admissible symbol that is not stable.
  \item is a cover that is not preserved and  $u(m_{i_{1}};m_{i_{2}};\ldots;m_{i_{t}})$  is a symbol that is  $L$-inadmissible and is not stable.
  \item is not cover, $F$ is preserved and  $u(m_{i_{1}};m_{i_{2}};\ldots;m_{i_{t}})$  is a stable $L$-admissible symbol.
  \item is not cover, $F$ is not preserved and  $u(m_{i_{1}};m_{i_{2}};\ldots;m_{i_{t}})$ is a stable symbol that is  $L$-inadmissible.
\end{itemize}
\end{theorem}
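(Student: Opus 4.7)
The plan is to organize the proof as a two-by-two case analysis on the pair of attributes (is $F$ a cover? is $F$ preserved?), and then read off the symbol-theoretic consequences from the three equivalences already established. In other words, I want to exploit the fact that Propositions~\ref{s.a.noestable=cubiertapreservada}, \ref{s.estable=nocubierta} and \ref{s.noadmisible=nopreservado} together give bidirectional translations between each of the three ``subset'' attributes (cover, preserved, preserved cover) and the corresponding ``symbol'' attributes (non-stable, $L$-admissible, non-stable $L$-admissible).

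First I would observe that the four bullet points in the statement are set up exactly so that the subset-side of each bullet ranges over the four possible combinations of (cover/not cover)$\times$(preserved/not preserved). Hence every $F$ falls into exactly one of the four ``subset profiles,'' and mutual exclusivity is automatic from the dichotomy. So the real content is to verify, for each profile, that the associated symbol $u(m_{i_{1}};\ldots;m_{i_{t}})$ has the claimed combination of stability and $L$-admissibility.

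Next I would run the four cases:
\begin{itemize}
\item If $F$ is a preserved cover, then by Proposition~\ref{s.a.noestable=cubiertapreservada} the symbol is a non-stable $L$-admissible symbol; this is bullet~1.
\item If $F$ is a cover but not preserved, then Proposition~\ref{s.estable=nocubierta} (contrapositive: a cover gives a non-stable symbol) makes the symbol not stable, and Proposition~\ref{s.noadmisible=nopreservado} makes it $L$-inadmissible; this is bullet~2.
\item If $F$ is not a cover and is preserved, then Proposition~\ref{s.estable=nocubierta} gives stability and Proposition~\ref{s.noadmisible=nopreservado} (contrapositive) gives $L$-admissibility; this is bullet~3.
\item If $F$ is not a cover and not preserved, then Proposition~\ref{s.estable=nocubierta} still gives stability while Proposition~\ref{s.noadmisible=nopreservado} gives $L$-inadmissibility; this is bullet~4.
\end{itemize}

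I do not expect a real obstacle here, since the proof is essentially bookkeeping with the three equivalences and the trivial observation that the two binary attributes partition all subsets into four disjoint classes. The only mild subtlety to flag is consistency with Theorem~\ref{clasificasimbolos}: that earlier theorem lists four symbol-profiles but phrases them as ``three properties'' in prose, and one has to confirm that the four subset-profiles here biject with those four symbol-profiles under the translations above. Once that bijection is noted, the present theorem is a direct corollary of Propositions~\ref{s.a.noestable=cubiertapreservada}, \ref{s.estable=nocubierta} and \ref{s.noadmisible=nopreservado}, and the proof is simply a reference to those three results, exactly as the author indicates.
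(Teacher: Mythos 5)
Your proof is correct and takes essentially the same route as the paper: the paper's entire proof is the bare citation of Propositions~\ref{s.a.noestable=cubiertapreservada}, \ref{s.estable=nocubierta} and \ref{s.noadmisible=nopreservado}, and your two-by-two case analysis on (cover/not cover)$\times$(preserved/not preserved) is precisely the bookkeeping that citation leaves implicit. Nothing is missing.
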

\begin{proof}
This also follows from the Propositions~(\ref{s.a.noestable=cubiertapreservada}),~(\ref{s.estable=nocubierta}) and~(\ref{s.noadmisible=nopreservado}).
\end{proof}
\begin{example}
Consider the following monomial ideal
$$I=\langle\underbrace{x_{1}x_{2}x_{4}}_{m_{1}}, \underbrace{x_{1}x_{2}x_{3}}_{m_{2}},\underbrace{x_{1}x_{5}}_{m_{3}},\underbrace{x_{2}x_{3}x_{6}}_{m_{4}},\underbrace{x_{4}x_{6}}_{m_{5}}\rangle,$$
and the monomial order in $G(I)=\{m_{1}<m_{2}<m_{3}<m_{4}<m_{5}\}$.
The covers of $m_{1}$ are
\begin{itemize}
\item $C_{1}=\{m_{1}, m_{2}, m_{5}, m_{3}\}$,
\item $C_{2}=\{m_{1}, m_{2}, m_{5}, m_{4}\}$,
\item $C_{3}=\{m_{1}, m_{3}, m_{4}, m_{5}, m_{2}\}$,
\item $C_{4}=\{m_{1}, m_{3}, m_{4}, m_{5}\}$,
\item $C_{5}=\{m_{1}, m_{2}, m_{5}\}$.
\end{itemize}
The covers of $m_{2}$ are
\begin{itemize}
\item $C_{6}=\{m_{2}, m_{3}, m_{4}, m_{5}\}$,
\item $C_{7}=\{m_{2}, m_{3}, m_{4}, m_{1}\}$,
\item $C_{8}=\{m_{2}, m_{1}, m_{4}, m_{5}\}$,
\item $C_{9}=\{m_{2}, m_{3}, m_{4}\}$,
\item $C_{10}=\{m_{2}, m_{1}, m_{4}\}$.
\end{itemize}
Note that, $m_{3}$ have no covers. The covers of $m_{4}$ are
\begin{itemize}
\item $C_{11}=\{m_{4}, m_{2}, m_{5}, m_{3}\}$,
\item $C_{12}=\{m_{4}, m_{2}, m_{5}, m_{1}\}$,
\item $C_{13}=\{m_{4}, m_{2}, m_{5}\}$.
\end{itemize}
The covers of $m_{5}$ are
\begin{itemize}
\item $C_{14}=\{m_{5}, m_{1}, m_{4}, m_{2}\}$,
\item $C_{15}=\{m_{5}, m_{1}, m_{4}, m_{3}\}$,
\item $C_{16}=\{m_{5}, m_{1}, m_{4}\}$.
\end{itemize}
We note that
\begin{itemize}
\item[(i)] $C_{2}=C_{8}=C_{12}=C_{14}$, $C_{4}=C_{15}$ and $C_{6}=C_{11}$.
\item[(ii)] The $E$-minimal covers of $(G(I),<)$ are $C_{5}$, $C_{9}$, $C_{10}$, $C_{13}$ and $C_{16}$.
\item[(iii)] $C_{10}$ and $C_{16}$ are the only preserved covers.
\end{itemize}
The set of $L$-admissible symbols of $I$ with dimension $2$ is $$L_{2}=\{u(m_{1};m_{2}), u(m_{1};m_{3}), u(m_{1};m_{4}), u(m_{1};m_{5}), u(m_{2};m_{3}), u(m_{2};m_{4}), u(m_{3};m_{5}), u(m_{4};m_{5})\}.$$
The set of $L$-admissible symbols of $I$ with dimension $3$ is
$$L_{3}=\{u(m_{1};m_{2};m_{3}), u(m_{1};m_{2};m_{4}), u(m_{1};m_{3};m_{5}), u(m_{1};m_{4};m_{5})\}.$$
The set of $L$-inadmissible symbols of $I$ with dimension $2$ is
$$L_{2}^{'}=\{u(m_{2}; m_{5}), u(m_{3}, m_{4})\}.$$
The set of $L$-inadmissible symbols of $I$ with dimension $3$ is
$$L_{3}^{'}=\{u(m_{1}, m_{2}, m_{5}), u(m_{1}, m_{3}, m_{4}), u(m_{2}, m_{3}, m_{4}), u(m_{2}, m_{3}, m_{5}), u(m_{2}, m_{4}, m_{5}), u(m_{3}, m_{4}, m_{5})\}.$$
The set of $L$-inadmissible symbols of $I$ with dimension $4$ is
$$L_{4}^{'}=\{u(m_{1}; m_{2}; m_{5}; m_{3}), u(m_{1}; m_{2}; m_{5}; m_{4}), u(m_{1}; m_{3}; m_{4}; m_{5}), u(m_{2}; m_{3}; m_{4}; m_{5}), u(m_{2}; m_{3}; m_{4}; m_{1})\}.$$
The set of $L$-inadmissible symbols of $I$ with dimension $5$ is $L_{5}^{'}=\{u(m_{1}; m_{2}; m_{3}; m_{4};m_{5})$.
\\
We note that
\begin{itemize}
\item[(iv)] The $L$-admissible symbols $u(m_{1};m_{2};m_{4})$ and $u(m_{1};m_{4};m_{5})$, are the only non-stable $L$-admissible symbols.
\item[(iv$^{'}$)] The covers $C_{10}=\{m_{1}, m_{2}, m_{4}\}$ and $C_{16}=\{m_{1}, m_{4}, m_{5}\}$ are the only preserved covers.
\item[(v)] The symbols $u(m_{1};m_{2};m_{5})$, $u(m_{2};m_{3};m_{4})$ and $u(m_{2};m_{4};m_{5})$, are $L$-inadmissible and non-stable.
\item[(v$^{'}$)] The covers $C_{5}=\{m_{1},m_{2},m_{5}\}$, $C_{9}=\{m_{2},m_{3},m_{4}\}$, $C_{13}=\{m_{2},m_{4},m_{5}\}$ are non-preserved, where
$C_{5^{*}}=\{m_{2}, m_{5}\}$, $m_{1}$, $C_{9^{*}}=\{m_{3}, m_{4}\}$ and $C_{13^{*}}=\{m_{2}, m_{5}\}$  are broken subsets of $C_{5}$, $C_{9}$ and $C_{13}$, respectively.
\end{itemize}
\end{example}

\begin{example}
Consider the following monomial ideal $$I=\langle\underbrace{x_1x_2x_3}_{m_1},\underbrace{x_1x_2x_4}_{m_2},\underbrace{x_1x_3x_4}_{m_3},\underbrace{x_2x_3x_4}_{m_4},\underbrace{x_1x_2x_5}_{m_5},
\underbrace{x_1x_3x_6}_{m_6},\underbrace{x_1x_4x_7}_{m_7}\rangle,$$
and the monomial order in $G(I)=\{m_{1}<m_{2}<m_{3}<m_{4}<m_{5}<m_{6}<m_{7}\}$.
The $E$-minimal covers of $m_{1}$ are
\begin{itemize}
\item $C_1=\{m_1,m_2,m_3\}$,
\item $C_2=\{m_1,m_2,m_4\}$,
\item $C_3=\{m_1,m_2,m_6\}$,
\item $C_4=\{m_1,m_3,m_4\}$,
\item $C_5=\{m_1,m_3,m_5\}$,
\item $C_6=\{m_1,m_4,m_5\}$,
\item $C_7=\{m_1,m_4,m_6\}$,
\item $C_8=\{m_1,m_4,m_7\}$,
\item $C_9=\{m_1,m_5,m_6\}$,
\end{itemize}
The $E$-minimal covers of $m_{2}$ are
\begin{itemize}
\item $C_{10}=\{m_2,m_1,m_3\}$,
\item $C_{11}=\{m_2,m_1,m_4\}$,
\item $C_{12}=\{m_2,m_1,m_7\}$,
\item $C_{13}=\{m_2,m_3,m_4\}$,
\item $C_{14}=\{m_2,m_3,m_5\}$,
\item $C_{15}=\{m_2,m_4,m_5\}$,
\item $C_{16}=\{m_2,m_4,m_6\}$,
\item $C_{17}=\{m_2,m_4,m_7\}$,
\item $C_{18}=\{m_2,m_5,m_7\}$.
\end{itemize}
The $E$-minimal covers of $m_{3}$ are
\begin{itemize}
\item $C_{19}=\{m_3,m_1,m_2\}$,
\item $C_{20}=\{m_3,m_1,m_4\}$,
\item $C_{21}=\{m_3,m_1,m_7\}$,
\item $C_{22}=\{m_3,m_2,m_4\}$,
\item $C_{23}=\{m_3,m_2,m_6\}$,
\item $C_{24}=\{m_3,m_4,m_5\}$,
\item $C_{25}=\{m_3,m_4,m_6\}$,
\item $C_{26}=\{m_3,m_4,m_7\}$,
\item $C_{27}=\{m_3,m_6,m_7\}$.
\end{itemize}
The $E$-minimal covers of $m_{4}$ are
\begin{itemize}
\item $C_{28}=\{m_4,m_1,m_2\}$,
\item $C_{29}=\{m_4,m_1,m_3\}$,
\item $C_{30}=\{m_4,m_1,m_7\}$,
\item $C_{31}=\{m_4,m_2,m_3\}$,
\item $C_{32}=\{m_4,m_2,m_6\}$,
\item $C_{33}=\{m_4,m_3,m_5\}$.
\item $C_{34}=\{m_4,m_5,m_6,m_7\}$
\end{itemize}
We note that
\begin{itemize}
\item[(i)] $m_{5}$, $m_{6}$ and $m_{7}$  have no covers.

\item[(ii)] $C_1=C_{10}=C_{19}$, $C_2=C_{11}=C_{28}$, $C_4=C_{20}=C_{29}$, $C_8=C_{30}$, $C_{13}=C_{22}=C_{31}$, $C_{16}=C_{32}$ and $C_{24}=C_{33}$.
\item[(iii)] The covers $C_{13}$ and $C_{22}$ are the only preserved covers.
\end{itemize}
The set of $L$-admissible symbols of $I$ with dimension $2$ is
\\
$L_2=\{u(m_1;m_2),u(m_1;m_3),u(m_1;m_4),u(m_1;m_5),u(m_1;m_6),u(m_1;m_7),\\
u(m_2;m_5),u(m_2;m_7),u(m_3;m_6),u(m_3;m_7)\}.$
\\
\\
The set of $L$-admissible symbols of $I$ with dimension $3$ is
\\
$L_3=\{u(m_1;m_2;m_5),u(m_1;m_3;m_6),u(m_1;m_2;m_7), u(m_1;m_3;m_7)\}.$
\\
\\
The set of $L$-inadmissible symbols of $I$ with dimension $2$ is
\\
$L_2^{'}=\{u(m_2;m_3),u(m_2;m_4),u(m_2;m_6),u(m_3;m_4),u(m_3;m_5),u(m_4;m_5),
\\u(m_4;m_6),u(m_4;m_7),u(m_5;m_6),u(m_5;m_7),u(m_6;m_7)\}$.
\\
\\
The set of $L$-inadmissible symbols of $I$ with dimension $3$ is
\\
$L_3^{'}=\{u(m_1;m_2;m_3),u(m_1;m_2;m_4),u(m_1;m_2;m_6),u(m_1;m_3;m_4),u(m_1;m_3;m_5),
\\
u(m_1;m_4;m_5),u(m_1;m_4;m_6),u(m_1;m_4;m_7),u(m_1;m_5;m_6),u(m_1;m_5;m_7),u(m_1;m_6;m_7),
\\
u(m_2;m_3;m_4),u(m_2;m_3;m_5),u(m_2;m_3;m_6),u(m_2;m_3;m_7),u(m_2;m_4;m_5),u(m_2;m_4;m_6),
\\
u(m_2;m_4;m_7),u(m_2;m_5;m_6),u(m_2;m_5;m_7),u(m_2;m_6;m_7),u(m_3;m_4;m_5),u(m_3;m_4;m_6),
\\
u(m_3;m_4;m_7),u(m_3;m_5;m_6),u(m_3;m_5;m_7),u(m_3;m_6;m_7),u(m_4;m_5;m_6),u(m_4;m_5;m_7),
\\
u(m_4;m_6;m_7),u(m_5;m_6;m_7)\}$.
\\
We note that
\begin{itemize}
\item[(iv)] The $L$-admissible symbols $u(m_1;m_2;m_7)$ and $u(m_1;m_3;m_7)$, are the only non-stable $L$-admissible symbols.
\item[(iv$^{'}$)] The covers $C_{13}=\{m_{1}, m_{2}, m_{7}\}$ and $C_{22}=\{m_{1}, m_{3}, m_{7}\}$ are the only preserved covers.
\item[(v)] The symbols $u(m_1;m_2;m_5)$ and $u(m_1;m_3;m_6)$, are the only stable $L$-admissible symbols.
\end{itemize}
\end{example}
By the Theorem~\ref{equivalenciasdeideallyubeznik} the Lyubeznik resolution of $I$ with respect to the monomial order given in $G(I)$ is not a minimal free resolution. Therefore, the ideal $I$ is not a Lyubeznik ideal.

\section{The combinatorial calculation}
Finally, in this section we introduce the combinatorial invariants that will allow us to calculate some algebraic invariants, the projective dimension, the length of Lyubeznik, and the arithmetical rank of a monomial ideal.
\\

A {\it clutter\/} $\mathcal{C}$, with finite vertex set
$V$ is a family of subsets of $V$, called {\it edges\/}, none
of which is included in another. The set of
vertices and edges of $\mathcal{C}$ are denoted by $V(\mathcal{C})$
and $E(\mathcal{C})$ respectively. For example, a simple
graph (no multiple edges or loops) is a clutter. For simplicity we mean by clutter to $E(\mathcal{C})$.
\\
An {\it oriented clutter\/} $\mathcal{C}$, with finite vertex set
$(V,\prec)$ (A set totally ordered) is a family of subsets of $V$, called {\it oriented edges\/}, none
of which is included in another. The set of
vertices and oriented edges of $\mathcal{C}$ are denoted by $(V(\mathcal{C}),\prec)$
and $(E(\mathcal{C}),\prec)$ respectively.
\begin{definition}
For an  oriented clutter $\mathcal{C}$, if each edge is not preserved, then $\mathcal{C}$ is called a \emph{Lyubeznik clutter}.
\end{definition}

Let $(G=\{\mu_{1},\mu_{2},\ldots,\mu_{f}\},\prec)$ be a monomial order, and $I=\langle G(I)\rangle$ a monomial ideal in $R=K[x_1,\ldots,x_n]$, i.e., $G(I)=\{\mu_{1},\mu_{2},\ldots,\mu_{f}\}$ be its minimal set of monomial generators.
\\
Consider the {\it oriented clutter of E-minimal covers} $\mathcal{C}_{E}(I)$, with:
$$\mathcal{C}_{E}(I):=\{A\subseteq G(I) : A \textmd{ is an E-minimal cover } \}.$$
\begin{proposition}\label{IlyubeznikssiClyubeznik}
Let $(G(I),\prec)$ be a monomial order, and let $I=\langle G(I)\rangle$. Then $I$ is a Lyubeznik ideal, if and only if, $\mathcal{C}_{E}(I)$ is a Lyubeznik clutter.
\end{proposition}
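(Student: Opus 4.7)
The plan is to show that the proposition is essentially a direct reformulation of Theorem~\ref{equivalenciasdeideallyubeznik}(iv), which already characterizes the minimality of the Lyubeznik resolution in terms of $E$-minimal covers. First I would unfold the two sides of the asserted equivalence. By the definition of $\mathcal{C}_{E}(I)$, its edges are precisely the subsets $A\subseteq G(I)$ that arise as an $E$-minimal cover of some element of $G(I)$. By the definition of a Lyubeznik clutter, $\mathcal{C}_{E}(I)$ is a Lyubeznik clutter if and only if every such edge fails to be preserved under the order $\prec$.

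Next, I would invoke Theorem~\ref{equivalenciasdeideallyubeznik}(iv): $I$ is a Lyubeznik ideal (with respect to $\prec$) if and only if, for every $u\in G(I)$ and every $E$-minimal cover $C$ of $u$, the set $C$ is not preserved. Taking the union over $u\in G(I)$ of the collections of $E$-minimal covers of $u$ produces exactly the edge set $E(\mathcal{C}_{E}(I))$, so this statement is literally the same as "every edge of $\mathcal{C}_{E}(I)$ is not preserved". Combining these two reformulations yields the proposition in both directions simultaneously.

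The argument is almost entirely a matter of matching vocabulary, so I do not anticipate any substantive obstacle. The one point deserving explicit mention is that $\mathcal{C}_{E}(I)$ is defined as a \emph{set} of subsets of $G(I)$: an $A\subseteq G(I)$ that happens to be an $E$-minimal cover of several different elements $u\in G(I)$ still contributes a single edge. This bookkeeping detail is what lets the single quantifier "every edge of $\mathcal{C}_{E}(I)$" in the definition of Lyubeznik clutter faithfully encode the double quantifier "for every $u$ and every $E$-minimal cover $C$ of $u$" in Theorem~\ref{equivalenciasdeideallyubeznik}(iv). Once this observation is recorded, the proof collapses to a one-line citation of that theorem.
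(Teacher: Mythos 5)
Your proposal is correct and is essentially identical to the paper's own proof, which simply cites Theorem~\ref{equivalenciasdeideallyubeznik}; you merely spell out the vocabulary-matching (edges of $\mathcal{C}_{E}(I)$ versus the double quantifier in part (iv)) that the paper leaves implicit. Your parenthetical remark that the equivalence should be read ``with respect to $\prec$'' is a fair and worthwhile clarification, since the paper's definition of Lyubeznik ideal quantifies existentially over orders while both the clutter $\mathcal{C}_{E}(I)$ and Theorem~\ref{equivalenciasdeideallyubeznik}(iv) refer to the fixed order.
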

\begin{proof}
This follows from Theorem~(\ref{equivalenciasdeideallyubeznik}).
\end{proof}
\begin{theorem}\label{beti=preserved}
Let $I=\langle m_{1}\prec m_{2}\prec\cdots \prec m_{\mu}\rangle$ be a Lyubeznik ideal with $\mathcal{L}\bullet_{(I,\prec)}$ the minimal free resolution. Then
\[\beta_{t\, j}(R/I)=\beta_{t-1\, j}(I)\]
\[\beta_{t-1\, j}(I)= |\{u(i_{1};i_{2};\ldots;i_{t}) \,:\, u \textmd{ is L-admissible}; j=deg(lcm(m_{i_{1}},m_{i_{2}},\ldots,m_{i_{t}}))\}|\]
\[\beta_{t-1\, j}(I)= |\{A=\{m_{1}\prec m_{2}\prec\cdots \prec m_{t}\} \,:\, \textmd{ is preserved }; j=deg(lcm(A))\}|.\]
\end{theorem}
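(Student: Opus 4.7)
The plan is to chain together three facts already established in the paper, each of which supplies one of the three displayed equalities. The first equality, $\beta_{t,j}(R/I) = \beta_{t-1,j}(I)$, is the standard index shift of graded Betti numbers noted in Section~2 (coming from the short exact sequence $0 \to I \to R \to R/I \to 0$ together with the fact that $R$ is free, so the free resolution of $R/I$ is obtained from that of $I$ by prepending $R$ in homological degree $0$). Nothing more is required.

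For the second equality I would invoke Proposition~\ref{betti-simbolos} directly. Since $I$ is a Lyubeznik ideal, there is, by hypothesis, a total order on $G(I)$ under which $\mathcal{L}\bullet_{(I,\prec)}$ is a minimal free resolution of $R/I$. By the construction of $L^{t}$, a free $R$-basis of $L^{t}$ is precisely the set of $L$-admissible symbols $u(i_{1};i_{2};\ldots;i_{t})$ of dimension $t$, and the internal degree of the basis element $u(i_{1};\ldots;i_{t})$ is $\deg(\operatorname{lcm}(m_{i_{1}},\ldots,m_{i_{t}}))$. Counting these basis elements by internal degree gives exactly the claimed formula for $\beta_{t-1,j}(I)$.

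For the third equality I would apply Theorem~\ref{carasimplicial=simboloadmisible} (equivalently, the Corollary recorded immediately after it): the $L$-admissible symbols $u(m_{i_{1}};\ldots;m_{i_{t}})$ are in natural bijection with the preserved subsets $F=\{m_{i_{1}}\prec m_{i_{2}}\prec\cdots\prec m_{i_{t}}\}$ of $G(I)$. This bijection preserves cardinality (so dimension $t$ corresponds to $|F|=t$) and, since $\operatorname{lcm}(F)=\operatorname{lcm}(m_{i_{1}},\ldots,m_{i_{t}})$, it also preserves multidegree. Substituting this bijection into the second expression yields the third.

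No step here is genuinely difficult; the content of the theorem lies in the packaging, not in new computation. The only point I would take care with is to verify that in both substitutions the matching is by degree $j=\deg(\operatorname{lcm})$ term-by-term, so that the equalities hold for each individual pair $(t,j)$ and not merely after summing over $j$. With that verification, the three equalities follow successively from the Section~2 shift, Proposition~\ref{betti-simbolos}, and Theorem~\ref{carasimplicial=simboloadmisible}.
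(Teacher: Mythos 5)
Your proposal is correct and follows essentially the same route as the paper: the first two equalities come from Proposition~\ref{betti-simbolos} (the resolution being minimal means the $L$-admissible symbols of dimension $t$, graded by $\deg(\operatorname{lcm})$, form a homogeneous basis of $L^{t}$), and the third comes from the bijection between $L$-admissible symbols and preserved sets. The only cosmetic difference is that you cite Theorem~\ref{carasimplicial=simboloadmisible} and its corollary for that bijection, while the paper routes it through the classification Theorems~\ref{clasificasimbolos} and~\ref{clasificaconjuntos}, which encode exactly the same equivalence (via Proposition~\ref{s.noadmisible=nopreservado}).
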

\begin{proof}
This follows from Proposition~(\ref{betti-simbolos}), Theorem~(\ref{clasificasimbolos}) and Theorem~(\ref{clasificaconjuntos}).
\end{proof}

Now, given an ideal $I=\langle G(I)\rangle$ with a total order in its set of generators, $(G(I),\prec)$, we will define the obstructions, to measure the defect that has not to be an ideal of Lyubeznik.
\\
The \emph{obstruction of Lyubeznik} with respect to $\prec$:
$$ObsL(I,\prec):=ObsL(\mathcal{C}_{E}(I),\prec):=\textmd{max}\{|A| : A\in\mathcal{C}_{E}(I); A \textmd{ is preserved } \}.$$
When $\{|A| : A\in\mathcal{C}_{E}(I); A \textmd{ is preserved } \}=\emptyset$, we define that $ObsL(\mathcal{C}_{E}(I),\prec)=0$.
\\
The \emph{total obstruction of Lyubeznik}:
$$TObsL(I):=ObsL(\mathcal{C}_{E}(I)):=\textmd{min}\{ObsL(I,\prec) : \prec \textmd{ is a total order } \}.$$
\begin{theorem}\label{LyubeznikssiTObsL=0}
Let $I$ be a monomial ideal. Then $I$ is a Lyubeznik ideal, if and only if, $TObsL(I)=0$.
\end{theorem}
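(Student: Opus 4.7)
The plan is to unpack the definitions and reduce everything to the equivalence $(i) \Leftrightarrow (iv)$ of Theorem~\ref{equivalenciasdeideallyubeznik}. That theorem says, for a fixed monomial order $(G(I),\prec)$, that the Lyubeznik resolution $\mathcal{L}\bullet_{(I,\prec)}$ is a minimal free resolution if and only if no $E$-minimal cover of any generator is preserved. So the whole argument is about translating this characterization into the language of $ObsL$ and $TObsL$.

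First I would fix a total order $\prec$ and rewrite the right-hand side of the $ObsL$ definition. The set $\mathcal{C}_E(I)$ is precisely the collection of $E$-minimal covers of elements of $G(I)$, so
\[
\{|A| : A \in \mathcal{C}_E(I),\ A \text{ preserved}\} = \emptyset
\]
exactly when no $E$-minimal cover is preserved. By the convention in the definition of $ObsL$, this empty condition is equivalent to $ObsL(I,\prec) = 0$; conversely, any nonempty such set contains at least one element $A$ with $|A| \ge 1$, so in that case $ObsL(I,\prec) \ge 1 > 0$. Hence $ObsL(I,\prec) = 0$ if and only if every $E$-minimal cover is non-preserved, which by Theorem~\ref{equivalenciasdeideallyubeznik}$(i) \Leftrightarrow (iv)$ is equivalent to $\mathcal{L}\bullet_{(I,\prec)}$ being a minimal free resolution.

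Next I would take the minimum over all total orders. By definition, $TObsL(I) = 0$ if and only if there exists some total order $\prec$ with $ObsL(I,\prec) = 0$ (since $ObsL$ is a nonnegative integer). By the previous step, the existence of such a $\prec$ is equivalent to the existence of a total order on $G(I)$ for which $\mathcal{L}\bullet_{(I,\prec)}$ is minimal, which is the very definition of $I$ being a Lyubeznik ideal. This chain of equivalences closes the proof.

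The argument is essentially a bookkeeping reduction; no step poses a real obstacle. The only delicate point worth highlighting explicitly in the write-up is the edge case $ObsL(\mathcal{C}_E(I),\prec) = 0$ arising from the empty set, since one must match the ``$\{|A|:\ldots\}=\emptyset$'' convention against the ``no $E$-minimal cover is preserved'' condition of Theorem~\ref{equivalenciasdeideallyubeznik}(iv). Once that is checked, citing Theorem~\ref{equivalenciasdeideallyubeznik} (or equivalently Proposition~\ref{IlyubeznikssiClyubeznik}) closes the argument in one line.
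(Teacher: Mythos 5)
Your proposal is correct and follows the same route as the paper, which simply cites Theorem~\ref{equivalenciasdeideallyubeznik}; you have merely filled in the definitional bookkeeping (the equivalence $ObsL(I,\prec)=0 \Leftrightarrow$ no $E$-minimal cover is preserved, and the passage to the minimum over total orders) that the paper leaves implicit. No changes needed.
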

\begin{proof}
This follows from Theorem~(\ref{equivalenciasdeideallyubeznik}).
\end{proof}
In addition, we define two new combinatorial invariants that we capture the information in the resolution of Lyubeznik. Let $(G(I),\prec)$ be a monomial order, and let $I=\langle G(I)\rangle$. The \emph{preserved size}  with respect to $\prec$ and the \emph{preserved size}, by:
$$ps(I,\prec)=\textmd{max}\{|A| : A\subseteq G(I); A \textmd{ is preserved } \},$$
$$ps(I)=\textmd{min}\{ps(I,\prec) : \prec \textmd{ is a total order } \}.$$
\begin{theorem}\label{L=ps}
Let $I$ be a monomial ideal. Then $L(I,\prec)=ps(I,\prec)$ for each total order. Furthermore, $L(I)=ps(I)$.
\end{theorem}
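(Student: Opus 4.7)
The plan is to reduce the statement directly to the combinatorial identification established in Section 3. Fix a total order $\prec$ on $G(I)$. By construction, the Lyubeznik resolution
$$\mathcal{L}\bullet_{(I,\prec)}:\quad 0\to L^{\mu}\xrightarrow{\partial_{\mu}} L^{\mu-1}\to\cdots\to L^{1}\xrightarrow{\partial_{1}} L^{0}\to R/I\to 0$$
has $L^{t}$ free on the $L$-admissible symbols of dimension $t$. Its length $L(I,\prec)$ is therefore the largest $t$ for which some $L$-admissible symbol $u(m_{i_{1}};\ldots;m_{i_{t}})$ exists.

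The first step is to invoke Theorem~\ref{carasimplicial=simboloadmisible} (equivalently, the corollary giving the combinatorial reformulation of $L$-admissibility): a subset $F=\{m_{i_{1}}\prec\cdots\prec m_{i_{t}}\}\subseteq G(I)$ is preserved if and only if $u(m_{i_{1}};\ldots;m_{i_{t}})$ is $L$-admissible. This sets up a bijection, stratified by cardinality/dimension, between preserved subsets of $G(I)$ of size $t$ and $L$-admissible symbols of dimension $t$. In particular, the maximum cardinality of a preserved subset coincides with the maximum dimension attained by an $L$-admissible symbol, that is,
$$L(I,\prec)=\max\{t:L^{t}\neq 0\}=\max\{|F|:F\subseteq G(I),\ F\text{ preserved}\}=ps(I,\prec).$$

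For the global statement, both the $L$-length $L(I)$ and the preserved size $ps(I)$ are defined as the minimum of the corresponding $\prec$-dependent invariant taken over all total orders on $G(I)$. Since $L(I,\prec)=ps(I,\prec)$ for every $\prec$, taking minima yields
$$L(I)=\min_{\prec}L(I,\prec)=\min_{\prec}ps(I,\prec)=ps(I),$$
which is the second assertion.

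There is no real obstacle here beyond unwinding the definitions; the only point that deserves care is making sure the length of $\mathcal{L}\bullet_{(I,\prec)}$ is read as the top dimension of a non-vanishing $L^{t}$ (equivalently, of a non-empty face of $\Delta_{(I,\prec)}$), rather than some truncated or shifted index. Once that is clarified, the equality is a direct consequence of the symbol/face dictionary already proved.
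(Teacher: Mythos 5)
Your argument is correct and follows essentially the same route as the paper: the paper's proof simply cites the classification theorems, whose content is exactly the dictionary you invoke (a subset $F$ is preserved if and only if the corresponding symbol is $L$-admissible), and the rest is the same reading of the length of $\mathcal{L}\bullet_{(I,\prec)}$ as the top dimension with $L^{t}\neq 0$ followed by taking minima over total orders. Your explicit remark about how the length is indexed is a welcome clarification that the paper leaves implicit.
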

\begin{proof}
This follows from Theorems~(\ref{clasificasimbolos}) and ~(\ref{clasificaconjuntos}).
\end{proof}
\begin{corollary}\label{projdim=L=ps}
Let $I$ be a monomial ideal. If $I$ is a Lyubeznik ideal or $I$ is an almost Lyubeznik ideal, then $projdim(R/I)=L(I)=ps(I)$.
\end{corollary}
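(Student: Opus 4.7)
The plan is a short two-step chain of equalities. First, I would observe that $projdim(R/I) = L(I)$ follows directly from whichever hypothesis is in force. Then I would apply Theorem~\ref{L=ps} to get $L(I) = ps(I)$, and concatenate.

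For the first equality, suppose $I$ is a Lyubeznik ideal. By definition there exists a total order $\prec$ on $G(I)$ under which $\mathcal{L}\bullet_{(I,\prec)}$ is a minimal free resolution of $R/I$, so $L_{\prec}(I) = projdim(R/I)$. Because the general inequality $projdim(R/I) \leq L_{\prec'}(I)$ holds for every total order $\prec'$ (as $\mathcal{L}\bullet_{(I,\prec')}$ is always a free resolution), this value is in fact the minimum, giving $L(I) = projdim(R/I)$. If instead $I$ is only almost Lyubeznik, the equality $L(I) = projdim(R/I)$ is built directly into the definition, so nothing further is needed.

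For the second equality I would simply invoke Theorem~\ref{L=ps}, which states $L(I) = ps(I)$ for every monomial ideal, with no Lyubeznik or almost-Lyubeznik hypothesis required. Concatenating the two steps yields the desired chain $projdim(R/I) = L(I) = ps(I)$.

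The only nontrivial content in this corollary sits inside Theorem~\ref{L=ps}, which itself reduces to the correspondence of Theorem~\ref{carasimplicial=simboloadmisible} between $L$-admissible symbols of dimension $t$ and preserved subsets of cardinality $t$: the length of $\mathcal{L}\bullet_{(I,\prec)}$ equals the largest dimension of a maximal $L$-admissible symbol, which under that bijection is precisely $ps(I,\prec)$, and then minimizing over $\prec$ transfers the equality from $L_{\prec}$ to $L$ and from $ps(I,\prec)$ to $ps(I)$. Thus the main obstacle was already discharged earlier, and what remains for the corollary itself is just the bookkeeping above.
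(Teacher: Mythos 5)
Your proposal is correct and matches the paper's own (very terse) proof, which simply cites Theorem~\ref{L=ps} and leaves the equality $projdim(R/I)=L(I)$ to the definitions of Lyubeznik and almost Lyubeznik ideals; you merely spell out that step explicitly via the inequality $projdim(R/I)\leq L_{\prec}(I)$ for every order. No gaps.
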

\begin{proof}
This follows from Theorem~(\ref{L=ps}).
\end{proof}
\begin{corollary}\label{projdim=L=ps}
Let $I$ be a monomial ideal. If $projdim(R/I)=ps(I)$, then $I$ is an almost Lyubeznik ideal.
\end{corollary}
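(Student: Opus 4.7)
The plan is to invoke the preceding Theorem~\ref{L=ps} (which asserts $L(I)=ps(I)$) together with the given hypothesis, and then unwind the definition of \emph{almost Lyubeznik ideal}. The argument should be an immediate chain of equalities; there is no real combinatorial work to do beyond citing what has already been established.

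Concretely, I would first recall Theorem~\ref{L=ps}: for every monomial ideal one has the equality $L(I)=ps(I)$, obtained by minimizing over all total orders on $G(I)$. This is the key input. Next, combine this with the hypothesis $projdim(R/I)=ps(I)$, which by transitivity yields $projdim(R/I)=L(I)$. Finally, recall the definition given just after Proposition~\ref{betti-simbolos}: an ideal $I$ is called \emph{almost Lyubeznik} exactly when $L(I)=projdim(R/I)$. Therefore the equality established in the previous step is precisely the defining condition for $I$ to be an almost Lyubeznik ideal.

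There is essentially no obstacle here because Theorem~\ref{L=ps} does all the heavy lifting; the corollary is a one-line consequence. The only subtlety worth mentioning in the write-up is that one should not conflate the hypothesis $projdim(R/I)=ps(I)$ with the stronger condition that there exists a total order $\prec$ such that $\mathcal{L}\bullet_{(I,\prec)}$ is already the minimal free resolution (which would make $I$ a Lyubeznik ideal in the strict sense, not merely almost Lyubeznik). The corollary correctly claims only the weaker conclusion, which matches the information provided by Theorem~\ref{L=ps}.

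Thus the proof can be presented simply as: \emph{By Theorem~\ref{L=ps}, $L(I)=ps(I)$. Using the hypothesis, $L(I)=ps(I)=projdim(R/I)$. By definition, $I$ is an almost Lyubeznik ideal.} \QED
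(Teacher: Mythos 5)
Your proof is correct and is essentially the paper's own argument: the paper simply cites Theorem~\ref{L=ps}, and you have spelled out the intended one-line chain $L(I)=ps(I)=projdim(R/I)$ followed by the definition of almost Lyubeznik ideal. No differences worth noting.
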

\begin{proof}
This follows from Theorem~(\ref{L=ps}).
\end{proof}

Now, we present an application to another algebraic invariant, the arithmetical rank. First we introduce the definitions and results needed for our application.
\\
Let $A$ be Noetherian commutative ring with identity. We say that some elements $r_{1},\ldots,r_{m}$ in $A$
generate an ideal $I$ of $A$ up to radical if
$$\sqrt{I}=\sqrt{(r_{1},\ldots,r_{m})}.$$
The smallest $m$ with this property is called the \emph{arithmetical rank} of $I$, denoted by $ara(I)$. \\

The problem of determining the arithmetic rank was
first studied by P. Schenzel and W. Vogel \cite{Schenzel-Vogel}, T. Schmitt and W. Vogel \cite{Schmitt-Vogel}
and G. Lyubeznik \cite{Lyubeznik-rank}. Determi\-ning the arithmetical rank of an ideal
I, or at least a satisfactory upper bound for it, is, in general, a hard task. This problem is open in general.
Some results in this direction have already been proven in several works. The arithmetical rank of every ideal in the polynomial
ring $R=K[x_{1},...,x_{n}]$ (where $K$ is a field) is at most $n$, \cite{cota-n}.
\\

A better lower bound is provided, in general, by the local cohomological
dimension, which, for any squarefree monomial ideal, coincides with the
projective dimension.
\\
Let $projdim_{R}(R/I)$ the \emph{projective dimension} of $R/I$, i.e., the length of a minimal free
resolution of $R/I$. Let $H_{I}^{i}(R)$ denote the $i-th$ local cohomology mo\-du\-le of $R$ with respect to $I$. The
\emph{cohomological dimension} of $I$ is defined to be the natural number:
$$cd(I)=max\{i | H_{I}^{i}(R)\neq0\}.$$
We shall throughout suppose that $R$ is the polynomial ring $K[x_1,\ldots,x_n]$. From the expression of the local cohomology modules in terms of $\check{C}$ech
complex, one can easily see that (\cite[p.~414, Example~2]{Hartshorne-Coho-dim}, \cite[Theorem~5.4]{Huneke-desigualdad}) for
all ideals $I$ in a commutative Noetherian ring $$cd(I)\leq ara(I).$$
We recall that for $I$ monomial ideal, $ara(I)=ara(\sqrt{I})$ with $\sqrt{I}$ a squarefree monomial ideal  (See \cite{Barile-2005}). By Lyubeznik \cite[Theorem~1]{Lyubeznik-local-coho}, for all squarefree monomial ideal $I$ one has that \\
 $projdim(R/I)=cd(I)$. Therefore
\begin{equation*}\label{ht-projdim-cd-ara-m}
ht(I)\leq projdim(R/I)=cd(I)\leq ara(I)\leq\mu(I).
\end{equation*}
Where $ht(I)$ is the \emph{height} and $\mu(I)$ is the minimum number of generators.
In particular, if $I$ is a set-theoretic complete intersection, then $R/I$ is Cohen–
Macaulay.
\begin{corollary}\label{ara=projdim=L=ps}
Let $I$ be a squarefree monomial ideal. Then $ara(I)\leq L(I)=ps(I)$. In particular, if $I$ is a Lyubeznik ideal or $I$ is an almost Lyubeznik ideal ($L(I)=projdim(R/I)$), then
$$L(I)=ps(I)=projdim(R/I)=projdim(I) +1\leq ara(I).$$
\end{corollary}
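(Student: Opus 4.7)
The plan is to assemble three previously-established facts together with one classical bound of Lyubeznik for the arithmetical rank. First I note that $L(I)=ps(I)$ is exactly the content of Theorem~\ref{L=ps}, so that equality requires no further work. Next, to obtain $ara(I)\le L(I)$, I would invoke the result of Lyubeznik from \cite{Lyubeznik-rank}: for every total order $\prec$ on $G(I)$ the Lyubeznik resolution $\mathcal{L}\bullet_{(I,\prec)}$ has length $L(I,\prec)$ and provides an explicit collection of $L(I,\prec)$ polynomials whose radical equals $\sqrt{I}$. Since $I$ is squarefree we have $\sqrt{I}=I$, hence $ara(I)\le L(I,\prec)$ for every $\prec$; taking the minimum over all total orders yields $ara(I)\le L(I)$, completing the first assertion.

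For the "in particular" clause, if $I$ is a Lyubeznik or an almost Lyubeznik ideal then by definition $L(I)=projdim(R/I)$, so combined with the identity $projdim(R/I)=projdim(I)+1$ recalled in Section~2 and the equality $L(I)=ps(I)$ just established, we obtain $L(I)=ps(I)=projdim(R/I)=projdim(I)+1$. Finally, the standard inequality chain for squarefree monomial ideals, namely $projdim(R/I)=cd(I)\le ara(I)$ (which is displayed just before the statement and relies on Lyubeznik's theorem in \cite{Lyubeznik-local-coho}), furnishes $projdim(R/I)\le ara(I)$, giving the displayed chain.

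The main obstacle is essentially bibliographic rather than mathematical: the argument rests on Lyubeznik's construction from \cite{Lyubeznik-rank} that converts the Lyubeznik resolution into a set-theoretic generating set of the prescribed length (which is the only input not proved in the present paper). Once that is cited, the rest is a direct assembly of Theorem~\ref{L=ps}, the definition of a (almost) Lyubeznik ideal, and the inequality chain $ht(I)\le projdim(R/I)=cd(I)\le ara(I)\le\mu(I)$ already established in the preceding paragraph, so no new combinatorial argument is needed. It is worth noting that, in the Lyubeznik (or almost Lyubeznik) case, chaining both inequalities actually forces the equalities $ara(I)=projdim(R/I)=L(I)=ps(I)$, a stronger conclusion than the one stated, although the corollary as formulated only records the weaker bound $projdim(R/I)\le ara(I)$.
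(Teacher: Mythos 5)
Your proof is correct and follows essentially the same route the paper intends: $L(I)=ps(I)$ from Theorem~\ref{L=ps}, the upper bound $ara(I)\le L(I)$ from the external result converting a Lyubeznik resolution of length $L(I,\prec)$ into $L(I,\prec)$ set-theoretic generators of $\sqrt{I}$, and the chain $projdim(R/I)=cd(I)\le ara(I)$ together with the definition of (almost) Lyubeznik ideals for the displayed equalities. The one correction needed is bibliographic: the bound $ara(I)\le L(I)$ is Kimura's theorem \cite{Kimura-resolucionlyubeznik} (stated as Theorem~\ref{kimura-llongitud1} immediately after the corollary), not a result of \cite{Lyubeznik-rank}, which supplies the opposite-direction input $projdim(R/I)=cd(I)\le ara(I)$ for squarefree monomial ideals.
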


The most important theorem that relates to the Lyubeznik resolution and the arithmetical rank is as follows:

\begin{theorem}\label{kimura-llongitud1}\cite{Kimura-resolucionlyubeznik}
Let $I$ be a monomial ideal of $R$. Then $$ara(I)\leq L(I).$$
\end{theorem}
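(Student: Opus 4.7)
The plan is to prove this by the \emph{Schmitt--Vogel lemma}, the standard route used in Kimura's paper. Recall the lemma: if one can partition a generating set of $\sqrt{I}$ into finite groups $P_{0},P_{1},\ldots,P_{r}$ so that for every $k\geq 1$ and every pair $p\neq p'$ in $P_{k}$ some power of $pp'$ lies in the ideal generated by $P_{0}\cup\cdots\cup P_{k-1}$, then the sums $q_{k}=\sum_{p\in P_{k}}p$ satisfy $\sqrt{(q_{0},\ldots,q_{r})}=\sqrt{I}$, hence $\mathrm{ara}(I)\leq r+1$. So the task reduces to producing such a partition of $G(I)$ into $\ell:=L(I)$ groups.

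First I would fix a total order $\prec$ on $G(I)$ achieving $L_{\prec}(I)=L(I)=\ell$. By Theorem~\ref{L=ps} every preserved subset of $G(I)$ has cardinality at most $\ell$. Using the Lyubeznik complex $\Delta_{(I,\prec)}$, I would assign to each generator $m\in G(I)$ a \emph{level}
\[
\lambda(m)\;=\;\max\bigl\{\,k\;:\;\exists\,m_{1}\prec m_{2}\prec\cdots\prec m_{k}=m\ \text{with }\{m_{1},\ldots,m_{k}\}\in\Delta_{(I,\prec)}\bigr\}-1,
\]
which takes values in $\{0,1,\ldots,\ell-1\}$, and set $P_{k}:=\lambda^{-1}(k)$, $q_{k}:=\sum_{m\in P_{k}}m$. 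It is immediate that $\bigcup_{k}P_{k}=G(I)$ generates $I$ and hence $\sqrt{I}$.

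Next I would verify the Schmitt--Vogel condition. Take $k\geq 1$ and two distinct $m,m'\in P_{k}$, say $m\prec m'$. Let $\{m_{1}\prec\cdots\prec m_{k}=m\}$ be a preserved chain witnessing $\lambda(m)=k-1+1=k$. Appending $m'$ produces a set $F=\{m_{1},\ldots,m_{k},m'\}$ of size $k+2$ which \emph{cannot} be preserved: otherwise $\lambda(m')\geq k+1$, contradicting $m'\in P_{k}$. Hence some $A\subseteq F$ is broken; using the maximality of the chain one forces the court $v$ of $A$ to satisfy $v\prec\min F\preceq m$ and $v\mid\mathrm{lcm}(A)\mid\mathrm{lcm}(m,m')$. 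Because $v$ lies in a strictly shorter preserved witness, $\lambda(v)<k$. Finally, $\mathrm{lcm}(m,m')\mid mm'$ gives $mm'\in(v)\subseteq(P_{0}\cup\cdots\cup P_{k-1})$, as required.

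The delicate part, and the main obstacle, is the last bulleted implication: ensuring that the witness $v$ breaking the extended chain can always be chosen with $\lambda(v)<k$ rather than merely $v\prec m$. Iterating the chain-extension with $v$ in place of a tail element of the original witness and invoking maximality of $\lambda(m)$ gives the required control. Once the Schmitt--Vogel hypotheses are verified, the lemma yields $\sqrt{(q_{0},\ldots,q_{\ell-1})}=\sqrt{I}$ and therefore $\mathrm{ara}(I)\leq\ell=L(I)$.
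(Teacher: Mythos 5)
The paper does not actually prove this statement: it is imported verbatim from Kimura \cite{Kimura-resolucionlyubeznik}, and the elements $g_{1},\ldots,g_{\lambda}$ displayed immediately after the theorem are precisely Kimura's Schmitt--Vogel decomposition. So you have identified the right method, but your choice of Schmitt--Vogel sets is not Kimura's, and it does not work. The fatal step is the claim that the court $v$ of a broken subset $A\subseteq F=\{m_{1},\ldots,m_{k+1},m'\}$ satisfies $v\mid lcm(A)\mid lcm(m,m')$. Since $\{m_{1},\ldots,m_{k+1}\}$ is preserved, a broken $A$ must contain $m'$, but it may also contain the auxiliary chain elements $m_{1},\ldots,m_{k}$, so $lcm(A)$ need not divide $lcm(m,m')$, let alone $mm'$. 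Concretely, take the $4$-cycle $I=\langle ab,bc,cd,ad\rangle$ with $ab\prec bc\prec cd\prec ad$ (the situation of Proposition~\ref{graficasTLI-2}). Here $projdim(R/I)=3\leq L(I)=ps(I,\prec)=3$, and your levels are $P_{0}=\{ab\}$, $P_{1}=\{bc\}$, $P_{2}=\{cd,ad\}$, witnessed by the preserved chains $\{ab,bc,cd\}$ and $\{ab,cd,ad\}$. For the pair $cd,ad\in P_{2}$ one has $cd\cdot ad=acd^{2}$, and no power of $acd^{2}$ lies in $(ab,bc)$ because no $b$ ever appears; the broken subsets of $\{ab,bc,cd,ad\}$ all have court $ab$, which divides $lcm(bc,ad)=abcd$ but not $acd^{2}$. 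So the Schmitt--Vogel hypothesis fails for your partition even though the conclusion $ara(I)\leq 3$ is true.

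The repair is exactly what the paper transcribes from Kimura after the theorem statement: the group $P_{s}$ must consist not of single generators but of $m_{s}$ together with the \emph{products} $m_{i_{1}}\cdots m_{i_{\lambda-s+1}}$ taken over the preserved sets (faces of $\Delta_{(I,\prec)}$) of the appropriate dimension with $i_{1}\geq s+1$. In the $C_{4}$ example this places $acd^{2}=cd\cdot ad$ itself into the group indexed by $s=2$ of the paper's list (equivalently, into a group strictly earlier than the one containing $cd$ and $ad$ individually), which is what rescues the offending pair; the divisibility condition is then verified via $L$-admissibility of the symbols rather than via a level function, and the number of groups is $L(I,\prec)$ because faces have at most $L(I,\prec)$ vertices. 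The ``delicate part'' you flag (forcing $\lambda(v)<k$) is also a genuine gap, but it is downstream of the divisibility error and becomes moot once the Schmitt--Vogel sets are chosen as above.
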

\begin{corollary}\label{kimura-llongitud2}
Let $I$ be a squarefree monomial ideal of $R$. If $I$ is a Lyubeznik ideal or $I$ is an almost Lyubeznik ideal ($L(I)=projdim(R/I)$), then $$ara(I)=projdim(R/I)=L(I)=ps(I).$$
\end{corollary}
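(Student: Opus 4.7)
The plan is to assemble the corollary from four ingredients already in place, with no new argument required. Specifically, I would chain together: (a) the standard inequality $\operatorname{projdim}(R/I) \leq \operatorname{ara}(I)$ valid for squarefree monomial ideals, (b) Kimura's bound $\operatorname{ara}(I) \leq L(I)$ from Theorem~\ref{kimura-llongitud1}, (c) the combinatorial identity $L(I) = ps(I)$ from Theorem~\ref{L=ps}, and (d) the hypothesis $L(I) = \operatorname{projdim}(R/I)$ that defines (almost) Lyubeznik ideals.

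First I would recall why (a) holds in the squarefree setting: from the preceding discussion, for any ideal $I$ one has $cd(I) \leq \operatorname{ara}(I)$ via the \v{C}ech-complex description of local cohomology, and Lyubeznik's theorem on squarefree monomial ideals gives $\operatorname{projdim}(R/I) = cd(I)$. This is the one place where the squarefree hypothesis is genuinely used; without it, the equality $\operatorname{projdim}(R/I) = cd(I)$ can fail, and the chain below breaks.

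Next, I would write the chain of inequalities
\begin{equation*}
\operatorname{projdim}(R/I) \;\leq\; \operatorname{ara}(I) \;\leq\; L(I) \;=\; \operatorname{projdim}(R/I),
\end{equation*}
where the first inequality is (a), the second is Kimura's Theorem~\ref{kimura-llongitud1}, and the final equality is the Lyubeznik / almost Lyubeznik hypothesis. This sandwich forces all three quantities to coincide. Finally, appending the equality $L(I) = ps(I)$ from Theorem~\ref{L=ps} yields $\operatorname{ara}(I) = \operatorname{projdim}(R/I) = L(I) = ps(I)$, as required.

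There is essentially no obstacle here; the result is a routine consequence once the previous machinery is in place. If I were looking for something to be careful about, it would be making sure the reader sees that the (almost) Lyubeznik hypothesis is exactly what is needed to close the loop at the right-hand end of the sandwich, and that the squarefree hypothesis is exactly what is needed to open it at the left-hand end via $\operatorname{projdim}(R/I) = cd(I)$.
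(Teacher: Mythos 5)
Your proof is correct and follows exactly the route the paper intends: the paper itself records $projdim(R/I)=cd(I)\leq ara(I)$ for squarefree monomial ideals just before this corollary, and combining it with Theorem~\ref{kimura-llongitud1}, Theorem~\ref{L=ps}, and the hypothesis $L(I)=projdim(R/I)$ is precisely the sandwich argument the authors leave implicit (the corollary is stated without a written proof). Nothing is missing.
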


Making the duality Combinatorics-algebra with the algebraic contribution of Kimura \cite{Kimura-resolucionlyubeznik}, we provide the $L(I)$ generators of $\sqrt{I}$ (unless radical), obtained from combinatorial way. Let $I=\langle m_{1}\prec m_{2}\prec\cdots \prec m_{\mu}\rangle$ be a Lyubeznik ideal with $\mathcal{L}\bullet_{(I,\prec)}$ the minimal free resolution and $\Delta_{(I,\prec)}$ the Lyubeznik complex
of $I$ under the total order $\prec$, i.e.,
$$\Delta_{(I,\prec)}=\{F\in G(I) : min\{u\in G(I): u|lcm(H)\}\in H \textmd{  for all  } H\subseteq F\},$$
$$\Delta_{(I,\prec)}=\{F\in G(I) : min\{u\in G(I): F \textmd{  is preserved  }\},$$
with $dim(\Delta_{(I,\prec)})=max\{|F|-1 : F\in \Delta_{(I,\prec)}\}=L(I,\prec)-1$.
\\
The $\lambda:=L(I,\prec)$ elements $g_{1}, g_{2}, \ldots , g_{\lambda}$ such that $\sqrt{\langle g_{1}, g_{2}, \ldots , g_{\lambda}\rangle}=\sqrt{I}$, are obtained as follows:
\\
$g_{1}=m_{1},$
\\
$g_{2}=m_{2}+\sum_{\{m_{i_{1}\geq3}\prec m_{i_{2}}\prec\cdots\prec m_{i_{\lambda-1}}\}\in\Delta_{\lambda-1}}m_{i_{1}}m_{i_{2}}\cdots m_{i_{\lambda-1}},$
\\
$\vdots$
\\
$g_{s}=m_{s}+\sum_{\{m_{i_{1}\geq s+1}\prec m_{i_{2}}\prec\cdots\prec m_{i_{\lambda-s+1}}\}\in\Delta_{\lambda-s+1}}m_{i_{1}}m_{i_{2}}\cdots m_{i_{\lambda-s+1}},$
\\
$\vdots$
\\
$g_{\lambda}=m_{\lambda}+\sum_{\{m_{i_{1}\geq \lambda+1}\}\in\Delta_{1}}m_{i_{1}} = m_{\lambda}+m_{\lambda+1}+\cdots+m_{\mu}.$
Where, $\Delta_{i}=\{F\in \Delta_{(I,\prec)} : |F|=i\}$.
\\

Finally, we consider the case of a \textit{simple graph} $\mathcal{G}$ (no multiple edges or loops), with finite vertex set
$V=\{x_{1},...,x_{n}\}$. The set of
vertices and edges of $\mathcal{G}$ are denoted by $V(\mathcal{G})$
and $E(\mathcal{G})$ respectively. The {\it edge ideal\/} of $\mathcal{G}$,
denoted by $I(\mathcal{G})$, is the ideal of $R$
generated by all monomials $m_e=x_{i}x_{j}$ such
that $e=\{x_{i},x_{j}\}\in E(\mathcal{G})$. The map
$$\mathcal{G}\longmapsto I(\mathcal{G})$$
gives an one-to-one
correspondence between the family of simple graphs and the family of
squarefree monomial ideals. Edge ideals of graphs were introduced and studied by Villarreal in \cite{Vi2}.
\begin{definition}
A Lyubeznik ideal is a \emph{Lyubeznik totally ideal} if for any
total order on $G(I)$ the corresponding Lyubeznik resolution of $R/I$ is a minimal free resolution of
$R/I$.
\end{definition}
\begin{proposition}\label{graficasTLI}
Let $\mathcal{G}$ a simple graph and $I(\mathcal{G})$ the edge ideal of $\mathcal{G}$. If $\mathcal{G}$ if do not have paths of length $\geq3$, then
$I(\mathcal{G})$ is a Lyubeznik totally ideal.
\end{proposition}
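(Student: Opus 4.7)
The plan is to combine Theorem~\ref{equivalenciasdeideallyubeznik} with a structural description of the graphs in question. Since $I(\mathcal{G})$ being a Lyubeznik totally ideal means that for every total order on $G(I(\mathcal{G}))$ the Lyubeznik resolution is minimal, by Theorem~\ref{equivalenciasdeideallyubeznik}(ii) it suffices to show that, regardless of the order chosen, every cover $C$ of every monomial $u \in G(I(\mathcal{G}))$ is not preserved.

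First, I would prove a structural lemma: a connected graph $H$ with no path of length $\geq 3$ is either an isolated vertex, a star $K_{1,n}$ (with $n \geq 0$), or the triangle $K_{3}$. The argument is a short case analysis on a longest path $P = v_{1} v_{2} v_{3}$ in $H$ (the case $|P| \leq 1$ being trivial): any other vertex $w$ must attach only to $v_{2}$, for an edge $w v_{1}$ or $w v_{3}$ would yield a path of length $3$; if additionally $v_{1} v_{3} \notin E(H)$, then $H$ is a star with center $v_{2}$, while if $v_{1} v_{3} \in E(H)$ the existence of any fourth vertex $w$ produces the path $w\,v_{2}\,v_{1}\,v_{3}$, forcing $H = K_{3}$.

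Next, since distinct components of $\mathcal{G}$ share no vertices, the edge monomials from different components use disjoint variables. Consequently, if $C$ is a cover of $u = x_{i}x_{j}$, only the edges of $C$ lying in the component of $u$ can contribute $x_{i}$ or $x_{j}$ to $\operatorname{lcm}(C \setminus \{u\})$; and if the restriction $C' = C \cap E(\mathrm{component}(u))$ is not preserved, then the broken subset of $C'$ is also a broken subset of $C$. Thus the analysis reduces to single components. For a star component with center $c$ and leaves $\ell_{1},\dots,\ell_{n}$, each leaf $\ell_{k}$ appears in exactly one edge, so the hypothesis $u \mid \operatorname{lcm}(C \setminus \{u\})$ can never be met: stars admit no covers at all. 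For a triangle with edges $e_{1},e_{2},e_{3}$ on vertices $\{x,y,z\}$, the only cover (of any $e_{i}$) is $\{e_{1},e_{2},e_{3}\}$ itself, and for any total order the minimum edge $e$ of the triangle satisfies $e \mid \operatorname{lcm}(e',e'') = xyz$ while $e \prec \min\{e',e''\}$, so the pair $\{e',e''\}$ is broken and the cover is not preserved.

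Combining these steps, every cover is not preserved under any total order, so Theorem~\ref{equivalenciasdeideallyubeznik} gives that the Lyubeznik resolution of $I(\mathcal{G})$ is minimal for every total order on $G(I(\mathcal{G}))$; that is, $I(\mathcal{G})$ is a Lyubeznik totally ideal. The main obstacle is the structural lemma; once the components are classified as isolated vertices, stars, or triangles, the cover-theoretic verification is routine and order-independent, the key point being that in a triangle any two edges have the same lcm and are therefore divided by the third.
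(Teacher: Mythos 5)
Your proposal is correct, and it is in fact more careful than the paper's own proof, which consists of the single observation that no edge of such a graph admits an $E$--minimal cover, so that condition (iv) of Theorem~\ref{equivalenciasdeideallyubeznik} holds vacuously for \emph{every} total order. That vacuity argument is exactly what you give for the star components (every edge of a star has a leaf endpoint, hence is never covered), so on that part the two proofs coincide. The genuine difference is the triangle case. Under the standard convention that a path has distinct vertices, a graph with no path of length $\geq 3$ may still contain $K_{3}$ as a component, and the edges of a triangle \emph{do} have an $E$--minimal cover (the whole triangle), so the paper's one-line proof does not apply to them; your structural lemma (components are isolated vertices, stars, or $K_{3}$), the reduction to components via vertex-disjointness, and the observation that the triangle's unique cover is broken by its $\prec$-least edge under any order close exactly this gap --- and your triangle argument is the same one the paper itself uses in the separate proposition devoted to $K_{3}$. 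If instead the authors intend the closed walk around a triangle to count as a ``path of length $3$'' (which their later treatment of the $4$-cycle suggests), then the components are precisely stars, the paper's proof is complete, and your triangle case is unnecessary but harmless extra generality. Either way your argument is a complete and correct proof of the statement; the only cosmetic remarks are that $K_{1,0}$ duplicates the isolated-vertex case, and that the longest-path case analysis should also note that two non-central vertices cannot be adjacent to each other (which your $P_{4}$ argument handles implicitly).
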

\begin{proof}
Note that each edge $\{x_{i}x_{j}\}$ do not have $E$--minimal covers.
\end{proof}
\begin{proposition}\label{graficasTLI}
Let $\mathcal{G}$ a simple graph, $G=\{ab,bc,ac\}$ a cycle of length $3$, and $I(\mathcal{G})$ the edge ideal of $\mathcal{G}$. Then
$I(\mathcal{G})$ is a Lyubeznik totally ideal.
\end{proposition}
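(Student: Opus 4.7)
The plan is to apply Theorem~\ref{equivalenciasdeideallyubeznik}, specifically the equivalence (i) $\Leftrightarrow$ (ii): to conclude that $I(\mathcal{G})$ is a Lyubeznik totally ideal, it suffices to show that for every total order on $G(I(\mathcal{G})) = \{ab,bc,ac\}$ and every cover $C$ of any element $u \in G(I)$, the set $C$ fails to be preserved.

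First I would enumerate the covers of elements in $G(I)$. Since the three generators are pairwise non-dividing (they form a minimal generating set) and each has degree two, a proper subset of two generators has an $\mathrm{lcm}$ of degree at most three, but it is only divisible by the third generator when it equals the full $\mathrm{lcm}$ of the triangle. A direct check shows that for any two of the three edges, say $bc$ and $ac$, we have $\mathrm{lcm}(bc,ac)=abc$, which is divisible by $ab$; hence $C=\{ab,bc,ac\}$ is a cover of $ab$, and by symmetry it is a cover of each of the other generators. No smaller set is a cover, because $ab \nmid bc$ and $ab \nmid ac$ (and similarly for the other generators). Consequently, the unique cover (and unique $E$-minimal cover) in $G(I)$ is $C=\{ab,bc,ac\}$ itself.

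Next I would show that $C$ is not preserved under any total order $\prec$ on $G(I)$. Relabel the generators as $m_1 \prec m_2 \prec m_3$, and consider the subset $A=\{m_2,m_3\}\subseteq C$. Since any two distinct edges of a triangle have $\mathrm{lcm}$ equal to $abc$, we have $\mathrm{lcm}(A)=abc$. The remaining generator $m_1$ divides $abc$ and satisfies $m_1 \prec m_2 = \min(A)$, so $A$ is broken with court $m_1$. By Proposition~\ref{s.noadmisible=nopreservado} (or directly from the definition of $\Delta_{(I,\prec)}$), the presence of a broken subset forces $C$ to not be preserved.

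Applying Theorem~\ref{equivalenciasdeideallyubeznik}, the Lyubeznik resolution $\mathcal{L}\bullet_{(I,\prec)}$ is a minimal free resolution. Since $\prec$ was arbitrary, this holds for every total order on $G(I)$, and therefore $I(\mathcal{G})$ is a Lyubeznik totally ideal. The argument is essentially a book-keeping exercise once the unique cover is identified; the only conceptual point is that the symmetry of the triangle makes every pair of edges produce the full multidegree $abc$, which is what forces any order to break preservation.
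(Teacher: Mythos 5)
Your proof is correct and follows the same route as the paper's: the paper's entire proof is the one-line observation that the only $E$-minimal cover is $C=\{ab,bc,ac\}$ and that it is not preserved under any total order. You have simply filled in the bookkeeping (verifying that no two-element subset is a cover, and exhibiting the broken subset $\{m_2,m_3\}$ with court $m_1$), which is a faithful expansion of the same argument.
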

\begin{proof}
Note that the only $E$--minimal cover is $C=\{ab,bc,ac\}$ and is not preserved for any total order.
\end{proof}
\begin{proposition}\label{graficasTLI}
Let $\mathcal{G}$ a simple graph and $I(\mathcal{G})$ the edge ideal of $\mathcal{G}$. If $\mathcal{G}$ do not have paths of length $\geq4$, then
$I(\mathcal{G})$ is a Lyubeznik ideal.
\end{proposition}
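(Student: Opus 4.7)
The plan is to verify the hypothesis of Theorem~\ref{equivalenciasdeideallyubeznik} under every total order on $G(I(\mathcal{G}))$, which proves the stated result (and in fact yields the stronger statement that $I(\mathcal{G})$ is a Lyubeznik totally ideal). I proceed by first classifying the $E$-minimal covers of edges of $\mathcal{G}$, then invoking the hypothesis on paths, and finally noting that triangles are automatically non-preserved.

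For the classification, let $e=\{a,b\}\in G(I(\mathcal{G}))$. Any cover $C$ of $e$ must contain at least one edge $e_a=\{a,c\}$ through $a$ with $c\ne b$ and at least one edge $e_b=\{b,d\}$ through $b$ with $d\ne a$, because the $\mathrm{lcm}$ of a set of edges is supported on the union of its vertex sets. A minimal such cover is therefore of the form $C=\{e,e_a,e_b\}$, and the two structural possibilities are $c=d$, so $C$ is the triangle on $\{a,b,c\}$, or $c\ne d$, so $c,a,b,d$ are four distinct vertices and the edges $ca,ab,bd$ form a path on four vertices in $\mathcal{G}$.

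Invoking the hypothesis, interpreted (as in the earlier propositions of this section) via the number of vertices of the path, rules out the second case; hence every $E$-minimal cover is a triangle of $\mathcal{G}$. Finally, for any total order $\prec$ on $G(I(\mathcal{G}))$ and any triangle $T=\{ab,ac,bc\}$, the $\prec$-smallest edge of $T$, say $u=ab$, satisfies $u\prec ac$, $u\prec bc$, and $u\mid abc=\mathrm{lcm}(ac,bc)$. Hence the subset $\{ac,bc\}\subsetneq T$ is broken with court $u$, so $T$ is not preserved. Theorem~\ref{equivalenciasdeideallyubeznik} then yields that $I(\mathcal{G})$ is a Lyubeznik ideal.

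The only real obstacle is the translation between the graph-theoretic hypothesis and the algebraic combinatorics of $E$-minimal covers; once the dichotomy \emph{triangle versus four-vertex path} is recognized, the remainder is immediate and no delicate choice of order is required.
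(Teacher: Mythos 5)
Your classification of the $E$-minimal covers of an edge $e=\{a,b\}$ is correct and agrees with the paper's: each such cover is $\{e,e_a,e_b\}$ with $e_a\ni a$, $e_b\ni b$, and is either a triangle or three edges forming a path on four vertices. Your treatment of the triangle case (the least edge of the triangle is a court for the other two, so the triangle is never preserved) is also exactly the paper's argument. The problem is the step where you ``rule out the second case'' by reading ``paths of length $\geq 4$'' as paths on $\geq 4$ vertices. The paper clearly does not intend that reading here: its own proof explicitly keeps the covers of the form $\{x_rx_i,\,x_ix_j,\,x_jx_s\}$ with $r\neq s$ (a path on four vertices) and deals with them by \emph{constructing} a total order, namely one with $x_ix_j\prec x_rx_i$ and $x_ix_j\prec x_jx_s$, so that $\{x_rx_i,x_jx_s\}$ is broken with court $x_ix_j$. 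This is also why the conclusion is only ``Lyubeznik ideal'' and not ``Lyubeznik totally ideal'': for example, for the path $a$--$b$--$c$--$d$ the cover $\{ab,bc,cd\}$ of $bc$ \emph{is} preserved under the order $ab\prec bc\prec cd$, so the resolution is not minimal for every order, and a specific order must be exhibited. Under your reading the proposition would collapse to the nearly vacuous case where every component is a triangle or a star, would yield the ``totally'' conclusion the authors deliberately avoid, and would leave Proposition~\ref{graficasTLI-2} (the $4$-cycle, which contains a path on four vertices) with no role as a boundary example.

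So the genuine gap is the omitted case: you must show that when four-vertex-path covers do occur, there is a single global order under which every such cover (and every triangle) fails to be preserved. This is where the actual content, and the only delicacy, of the proposition lies: the local prescription ``put the middle edge of each such cover first'' has to be made globally consistent. The paper justifies this with the claim that the $E$-minimal covers are pairwise disjoint, which is itself not literally true (a double star, or a triangle with pendant edges, has distinct $E$-minimal covers sharing edges), so some additional care is needed even beyond what the paper writes; but a proof that simply discards this case by reinterpreting the hypothesis does not prove the intended statement.
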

\begin{proof}
Note that $\mathcal{G}$ do not have paths of length $\geq4$, then the $E$--minimal covers are disjoint. Furthermore, each $E$--minimal cover of  $\{x_{i}x_{j}\}$,(if exist), is the cycle $\{x_{r}x_{i},x_{i}x_{j},x_{j}x_{r}\}$ or has the form $\{x_{r}x_{i},x_{i}x_{j},x_{j}x_{s}\}$, in the first case is not preserved under any order, and in the second case is not preserved considering the total order $x_{i}x_{j}\prec x_{r}x_{i}\prec x_{j}x_{s}$.
\end{proof}
\begin{proposition}\label{graficasTLI-2}
Let $\mathcal{G}$ a simple graph, $G=\{ab,bc,cd,ad\}$ a cycle of length $4$, and $I(\mathcal{G})$ the edge ideal of $\mathcal{G}$. Then
$I(\mathcal{G})$ is not a Lyubeznik ideal.
\end{proposition}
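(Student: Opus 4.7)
The plan is to apply Theorem~\ref{equivalenciasdeideallyubeznik}, condition (iv): to show that $I(\mathcal{G})$ is not a Lyubeznik ideal, I would exhibit, for every total order $\prec$ on $G(I(\mathcal{G}))=\{ab,bc,cd,ad\}$, some $E$-minimal cover that is preserved.

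First, I would list the $E$-minimal covers. Since each edge of the $4$-cycle is a degree-$2$ squarefree monomial, and any $E$-minimal cover of an edge $e$ must supply in its lcm both variables of $e$, a short inspection shows that each $E$-minimal cover consists of an edge $e$ together with the two cycle-edges incident to $e$; so there are exactly four of them:
\[\{ab,bc,ad\},\quad \{bc,ab,cd\},\quad \{cd,bc,ad\},\quad \{ad,ab,cd\}.\]

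Second, I would exploit the symmetry of $\mathcal{G}$. Its automorphism group is $D_4$, which acts transitively on the edges and induces a ring automorphism of $R$ preserving $I(\mathcal{G})$; so without loss of generality the $\prec$-minimum $m_1$ of $G(I(\mathcal{G}))$ is $ab$. Writing $e_1=bc$, $e_2=ad$ for the edges incident to $m_1$ and $e_3=cd$ for the opposite edge, I would single out $C=\{m_1,e_1,e_3\}=\{ab,bc,cd\}$, the $E$-minimal cover of $e_1$, as my candidate preserved cover.

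Third, I would verify preservation by scanning the subsets $H\subseteq C$. For $H$ containing $m_1$, the quantity $\min\{u\in G(I):u\mid lcm(H)\}$ equals $m_1$, since $m_1\mid lcm(H)$ and $m_1$ is the $\prec$-minimum of all of $G(I(\mathcal{G}))$; thus it lies in $H$ automatically. For $H\in\{\{bc\},\{cd\},\{bc,cd\}\}$, the key observation is that neither $ab$ nor $ad$ divides the respective lcms $bc,cd,bcd$ (both require the variable $a$, which is absent), so the divisor set reduces to $H$ itself and preservation is immediate. Theorem~\ref{equivalenciasdeideallyubeznik} then delivers the conclusion.

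The main obstacle, modest as it is, is picking the \emph{right} $E$-minimal cover: the most natural candidate $\{m_1,e_1,e_2\}=\{ab,bc,ad\}$ (the cover of $m_1$ itself) is \emph{not} preserved, since its subset $\{bc,ad\}$ is broken by $m_1$ (one has $m_1\prec bc,ad$ and $m_1\mid lcm(bc,ad)=abcd$). One must instead work with one of the covers $\{m_1,e_i,e_3\}$ for $i=1,2$, containing the opposite edge $e_3$: here the only pair of cycle-non-adjacent edges inside the cover is $\{m_1,e_3\}$, which already contains $m_1$ and therefore cannot be broken by $m_1$.
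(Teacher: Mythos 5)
Your proposal is correct and follows essentially the same route as the paper: for any order (reduced by the edge-transitivity of the $4$-cycle to the case where $ab$ is $\prec$-minimal) one exhibits the preserved cover $\{ab,bc,cd\}$, which is exactly the paper's example. You merely supply the details the paper leaves implicit, namely the symmetry reduction, the list of $E$-minimal covers, and the subset-by-subset verification of preservation.
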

\begin{proof}
With any order, (for example $ab\prec bc\prec cd\prec ad$), we have a cover that is preserved, (in the example is $\{ab,bc,cd\}$).
\end{proof}
\begin{definition}
Let $I=\langle G(I)\rangle$ be a monomial ideal with $(G(I)=\{m_{1}, m_{2}, \cdots, m_{\mu}\},\prec)$ a total order. We say that $u$ is a \emph{possible court} of $D\subseteq G(I)$
if $u|lcm(D)$ and $u\notin D$. Let us remember that if in addition $u\prec D$, then $u$ is a \emph{court}.
\end{definition}
\begin{proposition}\label{posiblescortesorder}
Let $I=\langle G(I)\rangle$ be a monomial ideal with $(G(I)=\{m_{1}, m_{2}, \cdots, m_{\mu}\})$. Let $P=\{m_{i}\}$ be the set of the possible courts. Then any total order $(G(I)=\{m_{1}, m_{2}, \cdots, m_{\mu}\},\prec)$ such that $m_{i}\prec m$ for all $m\in G(I)\setminus P$ satisfied that $I$ is a Lyubeznik ideal witn $\prec$.
\end{proposition}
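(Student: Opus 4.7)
The plan is to invoke Theorem~\ref{equivalenciasdeideallyubeznik}(ii): it suffices to prove that for every $u \in G(I)$ and every cover $C$ of $u$, the set $C$ is not preserved under the given order $\prec$. So I fix such a cover $C$. Since $u \in C$ and $u \mid \mathrm{lcm}(C - \{u\})$, the element $u$ is a possible court of $D := C - \{u\}$, hence $u \in P$.

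Set $m^{*} := \min_{\prec} C$. The hypothesis on $\prec$ forces every element of $P$ to precede every element of $G(I) \setminus P$, and because $u \in P$, any element of $C$ that precedes $u$ in $\prec$ must itself lie in $P$; in particular $m^{*} \in P$. If $m^{*} = u$, then $u \prec v$ for every $v \in D$, so $u$ is a court of $D$, namely $u \mid \mathrm{lcm}(D)$, $u \notin D$, and $u \prec D$. Thus $D$ is a broken subset of $C$, proving that $C$ is not preserved.

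The essential case is $m^{*} \ne u$, i.e.\ $m^{*} \prec u$ inside $C$. Here one must still exhibit a broken subset of $C$. Since $m^{*} \in P$, by definition $m^{*}$ is a possible court of some subset of $G(I)$, but a priori that witnessing subset need not lie inside $C$. The strategy is to restrict to an $E$-minimal cover $C' \subseteq C$ of $u$, allowed by Theorem~\ref{equivalenciasdeideallyubeznik}(iv), and to exploit its minimality: each element of $C' - \{u\}$ contributes an indispensable prime-power to $\mathrm{lcm}(C' - \{u\})$. One then looks inside $C'$ for a subset $A$ on which some element of $P$ preceding $A$ acts as a court---the natural candidate being $A \subseteq C' - \{m^{*}\}$ with $m^{*} \mid \mathrm{lcm}(A)$, which automatically yields $m^{*}$ as court of $A$ since $m^{*} = \min_{\prec} C' \prec \min_{\prec} A$.

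The hard part, and the main obstacle of the proof, is precisely the production of such an $A$ inside $C'$: the divisibility relation $m^{*} \mid \mathrm{lcm}(D^{*})$ witnessing $m^{*} \in P$ uses a set $D^{*}$ that may be disjoint from $C'$, so one cannot directly transport the witness. The expected tool is Proposition~\ref{equiva2} combined with a careful exponent-by-exponent comparison between $m^{*}$, $\mathrm{lcm}(D^{*})$, and the generators appearing in $C'$; the $E$-minimality of $C'$ should force sufficient rigidity on the exponents of its elements to guarantee that the prime-powers making $m^{*} \mid \mathrm{lcm}(D^{*})$ possible already appear inside $\mathrm{lcm}(C' - \{m^{*}\})$. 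Once such an $A \subseteq C'$ is found, $A$ is broken, $C$ is not preserved, and Theorem~\ref{equivalenciasdeideallyubeznik} concludes that $I$ is a Lyubeznik ideal under $\prec$.
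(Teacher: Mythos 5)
Your reduction to Theorem~\ref{equivalenciasdeideallyubeznik}(ii), the remark that any $u$ covered by $C$ is a possible court of $D=C-\{u\}$ and hence lies in $P$, and the favourable case $\min_{\prec}C=u$ together already constitute the \emph{entire} argument the paper gives: its one-sentence proof asserts that every cover ``is not preserved with court $m_i$'', which tacitly assumes that the possible court at hand is $\prec$-minimal in the cover. You were right to isolate the case $\min_{\prec}C\neq u$ as the real issue, but what you offer there is not a proof: passing to an $E$-minimal subcover $C'$ and hoping that $E$-minimality lets you transport the witnessing set $D^{*}$ (with $m^{*}\mid lcm(D^{*})$) into a subset of $C'$ is exactly the step that has no justification --- $D^{*}$ may be disjoint from $C'$, and nothing about $E$-minimality of a cover of $u$ controls the exponents needed to make $m^{*}$ a court of a subset of $C'$.

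This gap cannot be closed, because the proposition is false as stated. For the $4$-cycle $I=\langle ab,bc,cd,ad\rangle$ every generator is a possible court ($bc\mid lcm(ab,cd)$, $ad\mid lcm(ab,cd)$, $ab\mid lcm(bc,ad)$, $cd\mid lcm(bc,ad)$), so $P=G(I)$, the hypothesis ``$m_{i}\prec m$ for all $m\in G(I)\setminus P$'' is vacuous, and the proposition would make $I$ a Lyubeznik ideal for every total order --- contradicting Proposition~\ref{graficasTLI-2} of this same paper, whose proof exhibits the preserved cover $\{ab,bc,cd\}$ of $bc$ under $ab\prec bc\prec cd\prec ad$. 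The statement (and your first case, which is all one then needs) is correct only under an extra hypothesis guaranteeing that the covered element is $\prec$-least in its cover, e.g.\ when $P$ is a singleton, or when every cover $C$ of $u$ meets $P$ only in $u$. So: your case split is more honest than the paper's proof, but the case you could not finish is not finishable, and the right conclusion is that the proposition needs a stronger hypothesis rather than a cleverer argument.
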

\begin{proof}
For every element $u$ of $G(I)$ and every
cover $C$ of $u$, $C$ is not preserved with court $m_{i}$.
\end{proof}

\end{document}